\newtheorem{thm}{Theorem}
\newtheorem{lem}{Lemma}
\theoremstyle{definition}
\theoremstyle{remark}
\newtheorem{rem}{Remark}
\numberwithin{equation}{section} \numberwithin{lem}{section}
\numberwithin{thm}{section} \numberwithin{prop}{section}
\numberwithin{cor}{section} \numberwithin{rem}{section}\numberwithin{hyp}{section}
\begin{document}

\title[
Euler alignment system]{
The global classical solution to compressible Euler system with velocity alignment}

\author{Lining Tong$^{\uppercase{1}}$}\thanks{$^1$ Email:  tongln@shu.edu.cn; Department of Mathematics, Shanghai University, 200244, China; Lining Tong is supported by NSFC(No. 11771274).}

\author{Li Chen$^{\uppercase{2}}$}\thanks{$^2$ Email: chen@math.uni-mannheim.de; Department of Mathematics, University of Mannheim, 68131 Mannheim, Germany; }

\author{Simone G\"ottlich$^{\uppercase{3}}$}\thanks{$^3$ Email: goettlich@uni-mannheim.de;  Department of Mathematics, University of Mannheim, 68131 Mannheim, Germany; Simone G\"ottlich is supported by the German Research Foundation, DFG grant GO 1920/7-1.}

\author{Shu Wang$^{\uppercase{4}}$}\thanks{$^4$ Email:wangshu@bjut.edu.cn; College of Applied Sciences, Beijing University of Technology, Ping Le Yuan 100, Beijing 100124, China; Shu Wang is supported by NSFC(No. 11831003, 11771031, 11531010).}
\thanks{$_*$ Corresponding author.}

\date{\today}

\maketitle

\begin{abstract}
 In this paper, the compressible Euler system with velocity alignment and damping is  considered, where the influence matrix of velocity alignment is not positive definite. Sound speed is used to reformulate the system into symmetric hyperbolic type. The global existence and uniqueness
of smooth solution for small initial data is provided.
\end{abstract}

{\tiny AMS SUBJECT CLASSIFICATIONS:} 35Q70, 35L65

{\tiny KEYWORDS:} non-local velocity alignment, global existence, nonlinear pressure

\section{Introduction}

In this paper, we study the following Cauchy problem
\begin{eqnarray}
&&\label{eqn1}\partial_t \rho+\nabla\cdot(\rho u)=0,\\
&&\label{eqn2}\partial_t (\rho u)+\nabla\cdot(\rho u\otimes u)+\nabla p(\rho)=-\frac{1}{\tau}\rho u-a\rho\int_{\mathbb{R}^N}{\mathbf \Gamma}(x-y)(u(x)-u(y))\rho(y)dy,
\end{eqnarray}
in $(0,\infty)\times \mathbb{R}^N$  and the initial conditions
\begin{eqnarray}\label{eqn3}
\rho|_{t=0}=\rho_0(x),\quad u|_{t=0}=u_0(x),\quad x\in \mathbb{R}^N,
\end{eqnarray}
where $\rho$ and $u$ are the unknown density and velocity, and the pressure $P(\rho)=A\rho^{\gamma}.$ The matrix is ${\mathbf \Gamma}(x)\in L^1(\mathbb{R}^N)$. The constants $A, \gamma\geq1, \tau>0, a>0$ are given. For simplicity, it is assumed that $A=1$.

For quasi-linear  hyperbolic systems, intensive studies have been carried out in numerous literature, such as \cite{K,L,M,MN}, and etc. It has been proved in general that for symmetrizable hyperbolic systems, smooth solutions exist locally in time and shock waves formation will breakdown the smoothness of the solutions in finite time even for the scalar case, see \cite{M}. However, if some damping terms are taken into account, shock waves can be avoided for small perturbation of the diffusion waves \cite{GHL, HTL}. Moreover, with the help of velocity damping, the existence and uniqueness of the global classical solution can be obtained, see for example \cite{HMP, STW, TW, WY, YZZ}.

The pressureless Euler system with non-local forces has been studied recently and very limited results have been done. The local existence of classical solutions of the complex material flow dynamics which has been derived in \cite{GKT}, under structural condition for the interaction force has been obtained in  \cite{CCGW}.   In \cite{CCZ}, for  the 1-D model with damping and non-local interaction, a critical threshold for the existence of classical solution by using the characteristic method is presented. 1-D entropy weak solution for Cucker-Smale type interaction has been obtained with the help of the compensated compactness argument in \cite{HHW}. The global existence of smooth solutions with small initial data for the model with velocity alignment can be found in \cite{CCTT, DKRT, HKK, HKK2, KT}. In these references, the influence function of velocity alignment is $\mathbf \Gamma (x)=\phi(|x|)\mathbf I_{N\times N}$, where $\phi(|x|)$ has a positive lower bound and $\mathbf I_{N\times N}$ is the identity matrix, with which the formation of shock can be prevented. In case that additional pressure and viscosity are added, for restrictive interaction potentials, the global weak solution and its long time behavior are obtained in \cite{CWZ}. By adapting the convex integration method, it has been shown in \cite{CFGG} that infinitely many weak solutions exist. Recently, the global existence of classical solutions for the hydrodynamic model with linear pressure term and non-local velocity alignment was given in \cite{C}, where the shock wave was  prevented by velocity alignment.

In many models, the communication weight matrices have different structures. Many of them do not need to be positive definite, as for example in the material flow model that has been proposed in \cite{GKT}, the interaction force includes $\mathbf\Gamma=\frac{x}{|x|}\otimes\frac{x}{|x|}$ as the weight of velocity alignment.

In our model, the influence function of velocity alignment $\mathbf \Gamma$ is a matrix which corresponds to a linear projection of the velocity field. Furthermore it is non-constant and not positive definite, which reflects the anisotropic non-local interaction within the system. Therefore, the velocity alignment alone can not prevent the formation of shock wave. In order to obtain the global existence of smooth solutions, the additional damping effect in the system is necessary. Additionally, the symmetry of the coefficients plays an important role in the analysis of the existence of smooth solutions when using the method of standard energy estimates. We will use sound speed to reconstruct equation $(\ref{eqn1})-(\ref{eqn2})$ into symmetric hyperbolic equations. It should be pointed out that this method will make the term of velocity alignment more complicated. After a detailed analysis of the relationship between velocity alignment and damping, the anisotropic non-local interaction  is overcome by using damping, and the existence 
 of the global classical solution of problem $(\ref{eqn1})-(\ref{eqn2})$ is obtained.

Here, we introduce several notations used throughout the paper.  For a function $u=u(x)$, $\|u\|_{L^p}$ denotes the usual $L^p(\mathbb{R}^N)$ -norm. We also set $C$ as a generic positive constant independent of $t$. For any non-negative integer $k,\  H^k := H^k( \mathbb{R}^N)$ denotes the Sobolev space $=W^{k,2}(\mathbb{R}^N)$, and $C^k(I;E)$ is the space of $k$-times continuously differentiable
functions from an interval $I \subset \mathbb{R }$ into a Banach space $E$. $\nabla^k$ denotes any partial
derivative $\partial^\alpha$ with multi-index $\alpha$, where$|\alpha|=k$. For simplicity, we write $\int f\mathrm{d}x:=\int_{\mathbb{R}^N} f\mathrm{d}x$ .

This paper is structured as follows: In Section 2, we reformulate the Cauchy problem $(\ref{eqn1})-(\ref{eqn3})$ into a symmetric hyperbolic system and present our main result. In Section 3,  we demonstrate the local existence under uniqueness of the classical solution for  the reconstructed system. Finally, we establish the priori estimates to prove the global existence result.

\section{reformulation and main result }

\subsection{Reformulation of the problem}\label{2.1}In this subsection, we will reformulate the Cauchy problem of the compressible Euler system $(\ref{eqn1})-(\ref{eqn3})$ as in \cite{STW}.
the main point is to obtain a symmetric system. We consider the case $\gamma>1$ in this paper and introduce the sound speed:
$$
\kappa(\rho) =\sqrt{P^{\prime}(\rho)},
$$
where $\bar{\kappa}=\kappa(\bar{\rho})$ is set to the sound speed at a background density $\bar{\rho}>0$.
The symmetrization in the case of $\gamma=1$ can be done similarly with a new variable $\ln \rho$.

Define
$$
\sigma(\rho)
=\nu(\kappa(\rho)-\bar{\kappa})\ \mathrm{ with }\ \nu=\frac{2}{\gamma-1}.
$$
Then the equation $(\ref{eqn1})-(\ref{eqn2})$ are transformed into the following system:
\begin{align}
&\label{eqn1a}\partial_t \sigma+\bar{\kappa}\nabla\cdot u =-u\cdot \nabla \sigma-\frac{1}{\nu} \sigma\nabla\cdot u,\\
&\label{eqn2a}\partial_t u+\bar{\kappa}\nabla \sigma+\frac{1}{\tau} u=-u\cdot \nabla u-\frac{1}{\nu}\sigma\nabla\cdot \sigma-a\int\mathbf \Gamma(x-y)(u(x)-u(y))(\frac{1}{\nu} \sigma(y)+\bar{\kappa})^\nu \mathrm{d}y,
\end{align}
where the constants are $a=\gamma^{-1/{(\gamma-1)}}>0$. The initial condition (\ref{eqn3}) becomes
\begin{equation}
\label{eqn3a}(\sigma, u)|_{t=0}=(\sigma_0(x),u_0(x))
\end{equation}
with $\sigma_0=\nu(\kappa(\rho_0)-\bar{\kappa})$. 
Note that as we did at the formal level, we can find the relation between the classical solutions $(\rho, u)$ and $(\sigma, u)$ to the systems $(\ref{eqn1})-(\ref{eqn2})$ and (\ref{eqn1a})-(\ref{eqn2a}), respectively, in
the following two lemmas. The proofs can be obtained by taking the similar strategy as in \cite{STW}.
\begin{lem}
For any $T>0$, if $(\rho, u)\in C^1(\mathbb{R}^N\times [0,T])$ is a solution of system $(\ref{eqn1})-(\ref{eqn2})$ with $\rho>0$, then $(\sigma,u))\in C^1(\mathbb{R}^N\times [0,T])$ is a solution for the system $(\ref{eqn1a})-(\ref{eqn2a})$ with $(\frac{1}{\nu} \sigma+\bar{\kappa})^\nu >0$. Conversely, if  $(\sigma,u))\in C^1(\mathbb{R}^N\times [0,T])$ is a solution for the system $(\ref{eqn1a})-(\ref{eqn2a})$ with $(\frac{1}{\nu} \sigma+\bar{\kappa})^\nu >0$, then $(\rho, u)\in C^1(\mathbb{R}^N\times [0,T])$ is a solution of system $(\ref{eqn1})-(\ref{eqn2})$ with $\rho>0$.
\end{lem}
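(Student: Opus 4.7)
The plan is to verify both directions of the equivalence by direct computation, exploiting the fact that for $\gamma>1$ the map $\rho\mapsto\sigma(\rho)=\nu(\kappa(\rho)-\bar\kappa)$ with $\kappa(\rho)=\sqrt{\gamma}\,\rho^{(\gamma-1)/2}$ is a smooth, strictly increasing bijection between $\{\rho>0\}$ and $\{\kappa>0\}=\{\tfrac{1}{\nu}\sigma+\bar\kappa>0\}$, with smooth inverse $\rho=\gamma^{-\nu/2}(\tfrac{1}{\nu}\sigma+\bar\kappa)^{\nu}$. The equivalence of the positivity conditions and the $C^1$ regularity of the transformed unknown then follow immediately from the chain rule, once $\rho$ (respectively $\tfrac{1}{\nu}\sigma+\bar\kappa$) is bounded away from zero on compact sets in $\mathbb{R}^N\times[0,T]$.

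For the forward implication, I would start from a classical solution $(\rho,u)$ with $\rho>0$ and differentiate $\sigma=\nu(\kappa(\rho)-\bar\kappa)$ in $t$ and $x$. The key algebraic identity, obtained from $\kappa'(\rho)=\tfrac{\gamma-1}{2}\kappa(\rho)/\rho=\kappa(\rho)/(\nu\rho)$, is
\[
\nu\rho\,\kappa'(\rho)=\kappa(\rho)=\bar\kappa+\tfrac{1}{\nu}\sigma.
\]
Multiplying the continuity equation $\partial_t\rho+u\cdot\nabla\rho+\rho\,\nabla\cdot u=0$ by $\nu\kappa'(\rho)$ and using this identity immediately reproduces $(\ref{eqn1a})$. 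For the momentum equation I would rewrite the pressure term as $\nabla P/\rho=\kappa^2\nabla\rho/\rho=\nu\kappa\nabla\kappa=(\bar\kappa+\tfrac{1}{\nu}\sigma)\nabla\sigma$, which splits exactly as the $\bar\kappa\nabla\sigma$ and $\tfrac{1}{\nu}\sigma\nabla\sigma$ terms in $(\ref{eqn2a})$; inside the interaction integral I substitute $\rho(y)=\gamma^{-\nu/2}(\tfrac{1}{\nu}\sigma(y)+\bar\kappa)^{\nu}$, absorbing the factor $\gamma^{-\nu/2}$ into the constant $a$ as declared after $(\ref{eqn2a})$.

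The reverse implication is structurally the same computation run backwards: starting from $(\sigma,u)\in C^1$ with $(\tfrac{1}{\nu}\sigma+\bar\kappa)^{\nu}>0$, I would define $\rho:=\gamma^{-\nu/2}(\tfrac{1}{\nu}\sigma+\bar\kappa)^{\nu}$, note that $\rho>0$ and $\rho\in C^1$ by composition, and then check that the same chain-rule identities (read in the other direction) transport $(\ref{eqn1a})$ into the continuity equation and $(\ref{eqn2a})$ into the momentum equation. The only care needed is to divide/multiply by $\rho$ consistently, which is legitimate because of the strict positivity.

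I do not expect a genuine obstacle here; the statement is essentially a smooth change of variables, and the reference \cite{STW} is cited for a parallel reformulation. The only subtlety worth flagging is the interaction integral: the change of variable is applied only to the argument $\rho(y)$, so the nonlocal structure (in particular the factor $u(x)-u(y)$ and the matrix $\mathbf\Gamma(x-y)$) is preserved, and the single constant $a$ in $(\ref{eqn2a})$ should be understood as the redefinition $a_{\text{new}}=a_{\text{old}}\gamma^{-\nu/2}$, which is the content of the formula $a=\gamma^{-1/(\gamma-1)}$ stated after $(\ref{eqn2a})$.
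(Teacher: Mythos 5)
Your proof is correct and follows exactly the route the paper intends: the paper itself does not write out the computation but refers to the analogous reformulation in \cite{STW}, and your direct chain-rule verification using $\nu\rho\,\kappa'(\rho)=\kappa(\rho)=\bar\kappa+\tfrac{1}{\nu}\sigma$ for the continuity equation and $\nabla P/\rho=\kappa\nabla\sigma$ for the pressure term is precisely that argument. Your remark on the interaction integral is also the right reading of the notation: the constant $a$ appearing in $(\ref{eqn2a})$ is the original $a$ from $(\ref{eqn2})$ times $\gamma^{-1/(\gamma-1)}=\gamma^{-\nu/2}$, which absorbs the Jacobian factor from $\rho(y)=\gamma^{-\nu/2}\bigl(\tfrac{1}{\nu}\sigma(y)+\bar\kappa\bigr)^{\nu}$, and the nonlocal structure $\mathbf\Gamma(x-y)(u(x)-u(y))$ is untouched by the change of unknown.
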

\begin{lem}
For any $T>0$, if $(\rho, u)\in C^1(\mathbb{R}^N\times [0,T])$ is a uniformly bounded solution of system $(\ref{eqn1})-(\ref{eqn2})$ with $\rho_0>0$, then $\rho>0$ on $\mathbb{R}^N\times [0,T]$. Conversely, if  $(\sigma,u))\in C^1(\mathbb{R}^N\times [0,T])$ is a uniformly bounded solution of system $(\ref{eqn1a})-(\ref{eqn2a})$ with $(\frac{1}{\nu} \sigma_0+\bar{\kappa})^\nu >0$, then  $(\frac{1}{\nu} \sigma+\bar{\kappa})^\nu >0$ on $\mathbb{R}^N\times [0,T]$.
\end{lem}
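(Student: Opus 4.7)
The plan is to prove both implications by the method of characteristics, reducing each transport-type equation to a linear ODE along the flow of $u$ and then using that $\nabla\cdot u$ is bounded to obtain an exponential factor that preserves the sign of the initial data.

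For the first direction, I would start from the continuity equation \eqref{eqn1} and rewrite it in non-conservative form as $\partial_t\rho + u\cdot\nabla\rho = -\rho\,\nabla\cdot u$. Since $u\in C^1$ is uniformly bounded on $\mathbb{R}^N\times[0,T]$ (so in particular Lipschitz in $x$ uniformly in $t$), the ODE $\dot X(t;x_0)=u(X(t;x_0),t)$, $X(0;x_0)=x_0$ has a unique global flow. Composing $\rho$ with this flow gives $\tfrac{d}{dt}\rho(X(t;x_0),t)=-\rho(X(t;x_0),t)\,\nabla\cdot u(X(t;x_0),t)$, and a direct integration yields $\rho(X(t;x_0),t)=\rho_0(x_0)\exp\!\bigl(-\int_0^t\nabla\cdot u(X(s;x_0),s)\,ds\bigr)$. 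The boundedness of $\nabla\cdot u$ on $[0,T]$ keeps the exponential factor strictly positive and bounded, so $\rho_0(x_0)>0$ forces $\rho>0$ on $[0,T]$ (using that the flow $x_0\mapsto X(t;x_0)$ is a homeomorphism of $\mathbb{R}^N$).

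For the converse direction, I would combine equation \eqref{eqn1a} with a change of variable: letting $w=\tfrac{1}{\nu}\sigma+\bar\kappa$, equation \eqref{eqn1a} rewrites as
\[
\partial_t\sigma + u\cdot\nabla\sigma = -\Bigl(\bar\kappa+\tfrac{1}{\nu}\sigma\Bigr)\nabla\cdot u = -w\,\nabla\cdot u,
\]
so dividing by $\nu$ produces the transport equation $\partial_t w + u\cdot\nabla w = -\tfrac{1}{\nu}w\,\nabla\cdot u$ for $w$. The same characteristic argument then gives $w(X(t;x_0),t)=w_0(x_0)\exp\!\bigl(-\tfrac{1}{\nu}\int_0^t\nabla\cdot u(X(s;x_0),s)\,ds\bigr)$, and the hypothesis $(\tfrac{1}{\nu}\sigma_0+\bar\kappa)^\nu>0$ means exactly $w_0>0$, so $w>0$ throughout $[0,T]$ and hence $(\tfrac{1}{\nu}\sigma+\bar\kappa)^\nu=w^\nu>0$.

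The main obstacle I anticipate is purely of a technical nature: one needs to know that $\nabla\cdot u$ is bounded on $\mathbb{R}^N\times[0,T]$ in order to get a bounded exponential factor and to justify the ODE calculation. This is what "uniformly bounded $C^1$ solution" is meant to provide (the solution together with its first derivatives is uniformly bounded), and it simultaneously ensures that $u$ is globally Lipschitz in $x$, so that the characteristic flow is uniquely defined and surjective onto $\mathbb{R}^N$ at each time $t\in[0,T]$. Once this input is accepted, both implications are immediate from the characteristic identities above; no estimate on the non-local alignment term in \eqref{eqn2a} enters the argument because the positivity is a property of the scalar transport structure in \eqref{eqn1}--\eqref{eqn1a}.
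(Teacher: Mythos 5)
Your proof is correct and is the standard characteristics argument; the paper does not write out a proof but defers to the strategy of \cite{STW}, which is exactly this: rewrite the continuity (resp.\ $\sigma$-) equation in transport form, flow along $\dot X=u(X,t)$, and read off $\rho(X(t;x_0),t)=\rho_0(x_0)\exp\bigl(-\int_0^t\nabla\cdot u\,ds\bigr)$ (and the analogous formula for $w=\tfrac1\nu\sigma+\bar\kappa$ with the extra $\tfrac1\nu$ in the exponent). One small remark: positivity already follows from the fact that the exponential of any finite real number is positive, and the exponent is finite because $\nabla\cdot u$ is continuous along the (continuous) characteristic on the compact interval $[0,T]$; the uniform bound on $\nabla\cdot u$ is only needed if one also wants quantitative upper and lower bounds on $\rho$, not for the bare positivity claim. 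Likewise, uniform boundedness of $u$ (rather than of $\nabla u$) is what guarantees that forward and backward characteristics exist on all of $[0,T]$, so the flow is onto $\mathbb{R}^N$ at every time and the argument covers all $(x,t)$.
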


\subsection{Main result}

In the subsection \ref{2.1} , we have presented the equivalent reconstruction system (\ref{eqn1a})-(\ref{eqn2a}) of the problem (\ref{eqn1})-(\ref{eqn2}) and the equivalence relation between them. Next, we will study the reconstructed system (\ref{eqn1a})-(\ref{eqn2a}) and provide the following results.
\begin{thm} \label{local}{\rm(Local-in-time exitence)} For $s>\frac{N}{2}+1$, assume the initial values
 $(\sigma_0(x),u_0(x))\in H^{s}( \mathbb{R}^N).
$
Then there exist a unique classical solution $(\sigma,u)$ of the Cauchy problem $(\ref{eqn1a})-(\ref{eqn3a})$ satisfying
\begin{equation}
(\sigma,u)\in C([0,T],H^s(\mathbb{R}^N))\cap C^1([0,T],H^2(\mathbb{R}^N))
\end{equation}
for some finite $T>0$.
\end{thm}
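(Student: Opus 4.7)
The plan is to adapt the classical Friedrichs/Kato local existence theory for symmetric hyperbolic systems, with the non-local alignment integral carried as a lower-order source term that depends on the previous iterate. Writing $U=(\sigma,u)^\top$, the principal part of \eqref{eqn1a}--\eqref{eqn2a} is of Friedrichs symmetric type: the constant-coefficient flux $\bar\kappa(\nabla\cdot u,\nabla\sigma)$, the transport part $u\cdot\nabla(\cdot)$, and the cross-coupling $\tfrac{1}{\nu}\sigma\nabla\cdot u$ / $\tfrac{1}{\nu}\sigma\nabla\sigma$ together form a symmetric hyperbolic operator (this is the standard Euler symmetrization in the $(\sigma,u)$ variables). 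The damping $\tfrac{1}{\tau}u$ is a dissipative zeroth-order term, and the alignment operator
$$\mathcal G[\sigma,u](x)\;:=\;a\int \mathbf\Gamma(x-y)\bigl(u(x)-u(y)\bigr)\bigl(\tfrac{1}{\nu}\sigma(y)+\bar\kappa\bigr)^\nu\,\mathrm{d}y$$
is of order zero in $(\sigma,u)$ because $\mathbf\Gamma\in L^1(\mathbb R^N)$.

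First I would set up a Picard iteration: starting from $(\sigma^0,u^0)\equiv(\sigma_0,u_0)$, let $(\sigma^{n+1},u^{n+1})$ be the unique $H^s$ solution of the linear symmetric hyperbolic system obtained by freezing the quasilinear coefficients at $(\sigma^n,u^n)$ and evaluating the alignment integral at $(\sigma^n,u^n)$. Each linear step is solvable in $C([0,T_n],H^s)\cap C^1([0,T_n],H^{s-1})$ by the standard existence theorem for linear symmetric hyperbolic systems, provided the source $\mathcal G[\sigma^n,u^n]$ lies in $C([0,T_n],H^s)$. Splitting
$$\mathcal G[\sigma,u](x)\;=\;a\,u(x)\,\bigl(\mathbf\Gamma\ast(\tfrac{1}{\nu}\sigma+\bar\kappa)^\nu\bigr)(x)\;-\;a\,\mathbf\Gamma\ast\bigl(u\,(\tfrac{1}{\nu}\sigma+\bar\kappa)^\nu\bigr)(x),$$
Young's convolution inequality together with the algebra property of $H^s$ for $s>N/2$ gives
$$\|\mathcal G[\sigma^n,u^n]\|_{H^s}\;\le\;C\bigl(1+\|\sigma^n\|_{H^s}\bigr)^\nu\|u^n\|_{H^s},$$
so the non-local source does not lose derivatives.

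Next I would close uniform $H^s$ bounds on a short time interval. Applying $\partial^\alpha$ with $|\alpha|\le s$ to the linearized equations, pairing with $\partial^\alpha U^{n+1}$ in $L^2$, and using Moser-type product estimates together with Kato--Ponce commutator estimates on $[\partial^\alpha,u^n\cdot\nabla]$ and $[\partial^\alpha,\tfrac{1}{\nu}\sigma^n\,\nabla]$, one obtains
$$\frac{d}{dt}\|U^{n+1}\|_{H^s}^2\;\le\;C\bigl(1+\|U^n\|_{H^s}\bigr)^{2\nu+2}\bigl(1+\|U^{n+1}\|_{H^s}^2\bigr).$$
An induction then produces $T>0$ and $M>0$ depending only on $\|U_0\|_{H^s}$ with $\sup_n\sup_{[0,T]}\|U^n\|_{H^s}\le M$. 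Convergence comes from an $L^2$ energy estimate on the differences $U^{n+1}-U^n$: the differences of the frozen coefficients and the difference of the alignment sources are both controlled by $\|U^n-U^{n-1}\|_{L^2}$ using the uniform $H^s$ bound and Young's inequality once more, so Gronwall gives a contraction in $C([0,T],L^2)$ on a possibly smaller interval. Interpolating with the uniform $H^s$ bound and invoking a Bona--Smith style approximation argument, the limit $(\sigma,u)$ belongs to $C([0,T],H^s)$, and using the equation $\partial_t(\sigma,u)\in C([0,T],H^{s-1})$, which embeds into $C^1([0,T],H^2)$ under the assumed regularity. Uniqueness is obtained by applying the same $L^2$ difference estimate to two classical solutions sharing the initial data.

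The main obstacle will be the uniform $H^s$ estimate in the iteration: one must simultaneously control the variable-coefficient commutators generated by $u^n\cdot\nabla$ and $\tfrac{1}{\nu}\sigma^n\nabla(\cdot)$ and the nonlinear weight $(\tfrac{1}{\nu}\sigma^n+\bar\kappa)^\nu$ that sits inside the convolution. The key structural facts that make this feasible are $\mathbf\Gamma\in L^1$, which by Young's inequality prevents any loss of derivatives in the non-local term and allows it to be absorbed as a lower-order perturbation, and $\bar\kappa>0$, which keeps $\tfrac{1}{\nu}\sigma+\bar\kappa$ bounded away from zero for small perturbations and makes the pointwise power $(\cdot)^\nu$ compatible with the algebra structure of $H^s$.
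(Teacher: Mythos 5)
Your proposal is correct and follows essentially the same route as the paper: a Friedrichs--Kato iteration with coefficients frozen at the previous iterate, uniform $H^s$ bounds on a short time interval via Moser/commutator estimates, an $L^2$ contraction on differences to get convergence in low norm, interpolation plus recovery of $C([0,T];H^s)$ regularity, and uniqueness from the same $L^2$ difference estimate. The paper likewise treats the alignment integral as a zeroth-order source controlled by Young's inequality and the $H^s$ algebra, and its Lemma 3.1 plays exactly the role of your uniform induction bound.
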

\begin{thm}\label{global}(Global-in-time exitence)
Suppose background sound speed $\bar{\kappa}$ satisfying $2a\bar{\kappa}^\nu\|\mathbf \Gamma\|_{L^1}<\frac{1}{\tau}$. If $\|\sigma_0\|_{H^s}+\|u_0\|_{H^s}\leq \delta_0$ with sufficiently small $\delta_0>0$, then the Cauchy problem $(\ref{eqn1a})-(\ref{eqn3a})$ has a unique global classical solution. 
\end{thm}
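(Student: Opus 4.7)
The plan is to combine the local existence from Theorem~\ref{local} with a uniform-in-time a priori estimate, using a standard continuation argument. Theorem~\ref{local} provides a solution $(\sigma,u)$ on a maximal interval $[0,T_*)$; if the $H^s$ norm of $(\sigma,u)$ can be shown to stay bounded by a fixed small constant on every $[0,T]\subset[0,T_*)$, then necessarily $T_*=\infty$. The task therefore reduces to establishing a closed a priori energy inequality under the bootstrap hypothesis that $\|(\sigma,u)(t)\|_{H^s}\le\varepsilon$ is sufficiently small.

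For every multi-index $|\alpha|\le s$, I would apply $\partial^\alpha$ to (\ref{eqn1a})--(\ref{eqn2a}) and pair with $\partial^\alpha\sigma$ and $\partial^\alpha u$ in $L^2$. By symmetry, the principal cross terms $\bar\kappa\langle\partial^\alpha\nabla\sigma,\partial^\alpha u\rangle+\bar\kappa\langle\partial^\alpha\nabla\cdot u,\partial^\alpha\sigma\rangle$ cancel after integration by parts; the damping contributes $-\frac{1}{\tau}\|\partial^\alpha u\|_{L^2}^2$; and the quadratic nonlinearities $u\cdot\nabla\sigma$, $u\cdot\nabla u$ and $\frac{1}{\nu}\sigma\nabla\cdot u$, together with their Kato--Ponce commutators, are controlled by $C\|(\sigma,u)\|_{H^s}\big(\|\nabla\sigma\|_{H^{s-1}}^2+\|u\|_{H^s}^2\big)$, which is cubic under the smallness assumption.

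The central obstacle is the non-sign-definite alignment term
\[
-a\int\mathbf{\Gamma}(x-y)\big(u(x)-u(y)\big)\Big(\tfrac{1}{\nu}\sigma(y)+\bar\kappa\Big)^\nu dy.
\]
I would split the weight as $(\tfrac{1}{\nu}\sigma(y)+\bar\kappa)^\nu=\bar\kappa^\nu+R(\sigma(y))$ with $|R|\le C|\sigma|$ for $\sigma$ small, so that after a change of variables the leading contribution reduces to $-a\bar\kappa^\nu\big[\big(\int\mathbf{\Gamma}(z)dz\big)u(x)-(\mathbf{\Gamma}\ast u)(x)\big]$. Pairing with $u(x)$ and integrating, the first summand is bounded using $\|\int\mathbf{\Gamma}(z)dz\|_{\mathrm{op}}\le\|\mathbf{\Gamma}\|_{L^1}$ and the second via Young's convolution inequality, giving a total bound $2a\bar\kappa^\nu\|\mathbf{\Gamma}\|_{L^1}\|u\|_{L^2}^2$. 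Combined with the damping this produces net dissipation $-c_0\|u\|_{L^2}^2$ with $c_0:=\frac{1}{\tau}-2a\bar\kappa^\nu\|\mathbf{\Gamma}\|_{L^1}>0$ precisely by hypothesis. Since $\partial^\alpha$ commutes with convolution by $\mathbf{\Gamma}$, the same estimate applies at every derivative level, while the Leibniz remainders involving $\partial^\beta R(\sigma)$ are absorbed as higher-order cubic terms by smallness of $\|\sigma\|_{H^s}$.

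The previous step yields dissipation only for $u$; to recover dissipation for $\nabla\sigma$ I would invoke the well-known Matsumura--Nishida cross estimate, testing (\ref{eqn2a}) against $\nabla\sigma$ and using (\ref{eqn1a}) to convert $\int\partial_t u\cdot\nabla\sigma$ into $\tfrac{d}{dt}\int u\cdot\nabla\sigma+\bar\kappa\|\nabla\cdot u\|_{L^2}^2$ plus cubic remainders. Iterating on derivatives up to order $s-1$ and adding a sufficiently small multiple of the resulting cross functional to the pure energy gives an augmented functional $\mathcal{E}(t)\sim\|(\sigma,u)(t)\|_{H^s}^2$ satisfying
\[
\frac{d}{dt}\mathcal{E}+c\big(\|u\|_{H^s}^2+\|\nabla\sigma\|_{H^{s-1}}^2\big)\le C\sqrt{\mathcal{E}}\,\big(\|u\|_{H^s}^2+\|\nabla\sigma\|_{H^{s-1}}^2\big).
\]
A standard bootstrap then closes the argument: if $\delta_0$ is chosen so small that $C\sqrt{\mathcal{E}(0)}\le c/2$, the dissipation absorbs the nonlinearity, $\mathcal{E}(t)\le\mathcal{E}(0)$ holds uniformly on $[0,T_*)$, and the continuation criterion forces $T_*=\infty$.
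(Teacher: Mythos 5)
Your proposal is correct and follows essentially the same route as the paper: a bootstrap in $H^s$, the decomposition $(\tfrac{1}{\nu}\sigma+\bar\kappa)^\nu=\bar\kappa^\nu+R(\sigma)$ to extract the sharp coefficient $2a\bar\kappa^\nu\|\mathbf\Gamma\|_{L^1}$ absorbed by the damping $\tfrac{1}{\tau}$, a Matsumura--Nishida-type cross functional $\sum_r\int\nabla^{r-1}u\,\nabla^r\sigma\,dx$ to recover dissipation of $\nabla\sigma$, and a continuation argument. The only cosmetic imprecision is the remark that $\partial^\alpha$ ``commutes with convolution by $\mathbf\Gamma$'': the alignment term also contains the pointwise product $u(x)\,[\mathbf\Gamma*(\cdot)](x)$, so a Leibniz/commutator estimate (as in the paper's Lemma 4.2) is still needed there, but you already flag the Leibniz remainders, so this does not affect the argument.
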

\begin{rem}\label{rem}
Since the matrix $\mathbf \Gamma(x)$ is not positive definite, the damping coefficient needs to be large enough to make the damping term restrain the self-acceleration effect caused by velocity alignment to get the global well-posedness. The definition of condition $2a\bar{\kappa}^\nu\|\mathbf \Gamma\|_{L^1}<\frac{1}{\tau}$ is therefore natural. 
\end{rem}

\section{Local existence and uniqueness  }
In this section, we demonstrate the local existence and uniqueness of the classical solutions to (\ref{eqn1a})-(\ref{eqn3a}).
We will present a successive iteration scheme to construct approximate solutions and to obtain the energy estimates. Then we show that approximate solutions are convergent in Sobolev spaces using the contraction mapping principle and prove that the limit function is the local solution.

\subsection{Approximate solutions.}
We construct approximate solution by the following iterative method:
\begin{itemize}
  \item the zeroth approximation: $(\sigma^0,\ u^0)(x,\ t)=(\sigma_0,\ u_0)$;
  \item Suppose that the $k$th approximation $(\sigma^k,\ u^k)(x,\ t),\ k\geq1$ is given. Then define the
   $(k+1)$th approximation $(\sigma^{k+1},\ u^{k+1})(x,\ t)$ as a solution of the linear system
\end{itemize}
\begin{align}
\label{eqnim1}\partial_t \sigma^{k+1}&+\bar{\kappa}\nabla\cdot u^{k+1} =-u^{k}\cdot \nabla \sigma^{k+1}-\frac{1}{\nu}\sigma^{k}\nabla\cdot u^{k+1},\\
\label{eqnim2}\partial_t u^{k+1}&+\bar{\kappa}\nabla \sigma^{k+1}+\frac{1}{\tau} u^{k+1}=-u^{k}\cdot \nabla u^{k+1}-\frac{1}{\nu}\sigma^{k}\nabla\cdot \sigma^{k+1}\nonumber\\
&-a\int \mathbf \Gamma(x-y)\Big(u^k(x)-u^k(y)\Big)
\Big(\frac{1}{\nu}\sigma^k(y)+\bar{\kappa}\Big)^\nu dy
\end{align}
with the initial data
\begin{equation}\label{eqnim3}
(\sigma^{k+1},\ u^{k+1})|_{t=0}=(\sigma_0(x),\ u_0(x))\in\ H^s(\mathbb{R}^N).
\end{equation}
The local existence of the solutions $(\sigma^{k+1}, u^{k+1})$ in Sobolev spaces can be obtained by applying the linear theory of the multi-dimensional hyperbolic equations in \cite{BGSD}.
\subsection{Priori estimates }

We first set up several constants:
\begin{equation}\label{ML}
M=\sqrt{\|\sigma_0\|_{H^{s}}^2+\|u_0\|_{H^s}^2+1},
\end{equation}
and choose $T_0>0$ so that
\begin{align}\label{T0}
(e^{C(M,\nu,\bar{\kappa})T_0}-1)\Big(\|\sigma_0\|_{H^s}^2+\|u_0\|_{H^s}^2\Big)+e^{C(M,\nu,\bar{\kappa})T_0}C(M,\nu,\bar{\kappa})T_0\leq1,
\end{align}
where $C(M,\nu,\bar{\kappa})$ is given in the proof of Lemma \ref{yl3.1} below.

\begin{lem}\label{yl3.1} Let ${(\sigma^k,\ u^k)} $ be a sequence of the approximate solutions generates by
$(\ref{eqnim1})-(\ref{eqnim2}) $ together with the initial step $(\sigma^0,u^0)=(\sigma_0, u_0)$. Then the following estimate holds
\begin{equation}
\underset{{0\leq t\leq T_0}}{\sup}\|\sigma^k\|_{H^s}+\underset{{0\leq t\leq T_0}}{\sup}\|u^k\|_{H^{s}}\leq M,\quad for\  all\  k\geq 0,
\end{equation}
where $s>\frac{N}{2}+1$, $ M$ and $T_0$ are given in $(\ref{ML})$ and $ (\ref{T0})$.
\end{lem}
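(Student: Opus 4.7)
The plan is a standard induction on $k$. The base case $k=0$ is immediate from the definition of $M$ in \eqref{ML}. For the inductive step, assuming $\sup_{[0,T_0]}(\|\sigma^k\|_{H^s}+\|u^k\|_{H^s})\le M$, I apply $\partial^\alpha$ for each multi-index $|\alpha|\le s$ to the linear system \eqref{eqnim1}--\eqref{eqnim2}, test with $\partial^\alpha\sigma^{k+1}$ and $\partial^\alpha u^{k+1}$ respectively, integrate over $\mathbb{R}^N$, add the two identities, and finally sum over $|\alpha|\le s$. The key structural point — the reason Section~2 bothered to symmetrize — is that integration by parts gives the cancellation
\begin{equation*}
\int \bar{\kappa}\,\partial^\alpha(\nabla\cdot u^{k+1})\,\partial^\alpha\sigma^{k+1}\,dx + \int \bar{\kappa}\,\partial^\alpha(\nabla\sigma^{k+1})\cdot\partial^\alpha u^{k+1}\,dx = 0,
\end{equation*}
so that no loss of derivative occurs from the principal part. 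The damping contributes the favorable term $-\tfrac{1}{\tau}\|\partial^\alpha u^{k+1}\|_{L^2}^2$, which I simply discard for the upper bound (it plays a genuine role only in the global estimate of Theorem~\ref{global}).

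The quasilinear lower-order terms $u^k\cdot\nabla\sigma^{k+1}$, $u^k\cdot\nabla u^{k+1}$, $\tfrac{1}{\nu}\sigma^k\nabla\cdot u^{k+1}$, and $\tfrac{1}{\nu}\sigma^k\nabla\sigma^{k+1}$ are handled in the usual way: write each as a commutator $[\partial^\alpha,u^k\cdot\nabla]$ or $[\partial^\alpha,\sigma^k]\nabla$ acting on $(\sigma^{k+1},u^{k+1})$ plus a residual that can be absorbed after one more integration by parts, and then invoke the Kato--Ponce commutator estimate together with the Sobolev embedding $H^s\hookrightarrow W^{1,\infty}$ (valid since $s>N/2+1$) to bound everything by $C(M)(\|\sigma^{k+1}\|_{H^s}^2+\|u^{k+1}\|_{H^s}^2)$. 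The main obstacle is the nonlocal alignment term; I plan to split
\begin{equation*}
\int\mathbf\Gamma(x-y)\bigl(u^k(x)-u^k(y)\bigr)\bigl(\tfrac{1}{\nu}\sigma^k(y)+\bar{\kappa}\bigr)^\nu dy = u^k(x)\bigl(\mathbf\Gamma\ast F(\sigma^k)\bigr)(x) - \bigl(\mathbf\Gamma\ast(u^k F(\sigma^k))\bigr)(x),
\end{equation*}
where $F(z)=(\tfrac{1}{\nu}z+\bar{\kappa})^\nu$. After applying $\partial^\alpha$, Young's convolution inequality $\|\mathbf\Gamma\ast g\|_{L^2}\le\|\mathbf\Gamma\|_{L^1}\|g\|_{L^2}$ lets me push all derivatives onto the non-convolved factor, and the Moser composition estimate $\|F(\sigma^k)\|_{H^s}\le C(\|\sigma^k\|_{L^\infty})(1+\|\sigma^k\|_{H^s})$ — whose hypothesis is provided by the inductive bound and the embedding into $L^\infty$ — controls $F(\sigma^k)$ uniformly. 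In the product $u^k F(\sigma^k)$ I use the Moser product inequality. The whole nonlocal contribution is therefore bounded by $C(M,\nu,\bar{\kappa})\bigl(1+\|\sigma^{k+1}\|_{H^s}^2+\|u^{k+1}\|_{H^s}^2\bigr)$.

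Combining everything yields the differential inequality
\begin{equation*}
\frac{d}{dt}\bigl(\|\sigma^{k+1}\|_{H^s}^2+\|u^{k+1}\|_{H^s}^2\bigr) \le C(M,\nu,\bar{\kappa})\bigl(\|\sigma^{k+1}\|_{H^s}^2+\|u^{k+1}\|_{H^s}^2\bigr)+C(M,\nu,\bar{\kappa}),
\end{equation*}
with the constant $C(M,\nu,\bar{\kappa})$ being the one referenced in \eqref{T0}. An application of Grönwall's lemma gives, for $t\in[0,T_0]$,
\begin{equation*}
\|\sigma^{k+1}(t)\|_{H^s}^2+\|u^{k+1}(t)\|_{H^s}^2 \le e^{C(M,\nu,\bar{\kappa})t}\bigl(\|\sigma_0\|_{H^s}^2+\|u_0\|_{H^s}^2\bigr)+e^{C(M,\nu,\bar{\kappa})t}C(M,\nu,\bar{\kappa})t,
\end{equation*}
and the definition of $T_0$ in \eqref{T0} is precisely what ensures the right-hand side is bounded by $\|\sigma_0\|_{H^s}^2+\|u_0\|_{H^s}^2+1 = M^2$ throughout $[0,T_0]$, closing the induction.
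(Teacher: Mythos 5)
Your proposal follows essentially the same route as the paper: induction on $k$, apply $\partial^\alpha$ (the paper does $r=0$ and $1\le r\le s$ separately, you sum over $|\alpha|\le s$), exploit the symmetric cancellation of the principal part, treat the quasilinear terms by commutator/Moser estimates and Sobolev embedding, split the nonlocal term exactly as $u^k(x)\,\mathbf\Gamma\ast F(\sigma^k)-\mathbf\Gamma\ast(u^kF(\sigma^k))$ and control it with Young's convolution inequality plus Moser composition and product bounds, then close with Gr\"onwall and the definition of $T_0$. The only cosmetic caveat is that $F(\sigma^k)\notin H^s$ since it tends to the constant $\bar\kappa^\nu$ at infinity; one must (as the paper does) apply the composition estimate to $\nabla^rF(\sigma^k)$ for $r\ge1$ and use $\|F(\sigma^k)\|_{L^\infty}$ at the zeroth order, but this does not change the argument.
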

\begin{proof}
 We use the method of induction to prove the Lemma.\\

{\bf Step 1.} (Initial step) Because we choose $(\sigma^0,u^0)=(\sigma_0,u_0)$, together with the choice of $M,\ T_0>0$ in $(\ref{ML})$ and $ (\ref{T0})$, it is easy to check that
$$
\underset{{0\leq t\leq T_0}}{\sup}(\|\sigma^0\|_{H^{s}}+\|u^0\|_{H^{s}})\leq M.
$$

{\bf Step 2.} (Inductive step) Suppose that
\begin{equation}\label{ukm}
\underset{{0\leq t\leq T_0}}{\sup}(\|\sigma^k\|_{H^{s}}+\|u^k\|_{H^{s}})\leq M,
\end{equation}
where $T_0,\ M$ are positive constants determined in $(\ref{ML})$ and $ (\ref{T0})$.
We will prove that
\begin{align*}
\underset{{0\leq t\leq T_0}}{\sup}(\|\sigma^{k+1}\|_{H^s}+\|u^{k+1}\|_{H^s})\leq M.
\end{align*}
First, multiplying $\sigma^{k+1}, \ u^{k+1}$ on both sides of (\ref{eqnim1}), (\ref{eqnim2}) respectively, summing up and integrating over $\mathbb{R}^N$, we obtain
\begin{align}\label{L21}
&\frac{1}{2}\frac{d}{dt}\Big(\|\sigma^{k+1}\|_{L^2}^2+\|u^{k+1}\|_{L^2}^2\Big)+\frac{1}{\tau}\|u^{k+1}\|_{L^2}^2\nonumber\\
&\quad=-\int \Big(u^k\cdot\nabla \sigma^{k+1} \sigma^{k+1}+ u^k\cdot\nabla u^{k+1}\cdot u^{k+1}\Big)\mathrm{d}x
-\frac{1}{\nu}\int \Big(\sigma^k\nabla\cdot u^{k+1}\sigma^{k+1}+\sigma^k\nabla \sigma^{k+1}\cdot u^{k+1}\Big)\mathrm{d}x\nonumber\\
&\quad\quad-a\int \int \mathbf \Gamma(x-y)(u^k(x)-u^k(y))
\Big(\frac{1}{\nu} \sigma^k(y)+\bar{\kappa}\Big)^\nu \mathrm{d}y\cdot u^{k+1}(x)\mathrm{d}x\nonumber\\
&\quad=I_1+I_2+I_3.
\end{align}

We shall estimate the terms on the right-hand side of (\ref{L21}). Thanks to  the Sobolev embedding theorem and the inductive assumption (\ref{ukm}), using integration by parts, we obtain

\begin{align}
&I_1=-\int \Big(u^k\cdot\nabla \sigma^{k+1} \sigma^{k+1}+ u^k\cdot\nabla u^{k+1}\cdot u^{k+1}\Big)\mathrm{d}x
=\frac{1}{2}\int \nabla\cdot u^{k}\Big(|\sigma^{k+1}|^2+|u^{k+1}|^2\Big)\mathrm{d}x\nonumber\\
&\quad\leq \|\nabla\cdot u^{k}\|_{L_{\infty}}\Big(\|\sigma^{k+1}\|_{L^2}^2+\|u^{k+1}\|_{L^2}^2\Big)\leq CM\Big(\|\sigma^{k+1}\|_{L^2}^2+\|u^{k+1}\|_{L^2}^2\Big);\nonumber\\
&I_2=-\frac{1}{\nu}\int \Big(\sigma^k\nabla\cdot u^{k+1}\sigma^{k+1}+\sigma^k\nabla \sigma^{k+1}\cdot u^{k+1}\Big)\mathrm{d}x=\frac{1}{\nu}\int \nabla \sigma^k\cdot (u^{k+1}\sigma^{k+1})\mathrm{d}x\nonumber\\
&\quad\leq C\|\nabla \sigma^k\|_{L^{\infty}}\|\sigma^{k+1}\|_{L^2}\|u^{k+1}\|_{L^2}\leq CM\Big(\|\sigma^{k+1}\|_{L^2}^2+\|u^{k+1}\|_{L^2}^2\Big);\nonumber\\
&I_3=-a\int \int \mathbf \Gamma(x-y)(u^k(x)-u^k(y))
\Big(\frac{1}{\nu} \sigma^k(y)+\bar{\kappa}\Big)^\nu\cdot u^{k+1}(x)\mathrm{d}y\mathrm{d}x\nonumber\\
&\quad\leq C\|(\frac{1}{\nu} \sigma^k+\bar{\kappa})^\nu\|_{L^{\infty}}\|\mathbf \Gamma\|_{L^1}\|u^k\|_{L^2}\|u^{k+1}\|_{L^2}
\nonumber\\
&\quad\leq C(M,\nu,\bar{\kappa})(\|u^{k+1}\|_{L^2}^2+1).\nonumber
\end{align}
Combining the estimate of $I_i,\ i=1,\ 2,\ 3$, we obtain
\begin{equation}\label{L2un}
\frac{d}{dt}\Big(\|\sigma^{k+1}\|_{L^2}^2+\|u^{k+1}\|_{L^2}^2\Big)+\frac{2}{\tau}\|u^{k+1}\|_{L^2}^2\leq C(M,\nu,\bar{\kappa})\Big(\|\sigma^{k+1}\|_{L^2}^2+\|u^{k+1}\|_{L^2}^2+1\Big).
\end{equation}

Next we will get the higher order estimate of $(\sigma^{k+1}, u^{k+1})$.

Taking $\nabla^{r}, 1\leq r\leq s$ with respect to $x$ on both sides of $(\ref{eqnim1})$-$(\ref{eqnim2})$, and then multiplying  the resulting identities by $\nabla^r \sigma^{k+1},\ \nabla^r u^{k+1}$ respectively, summing up and integrating over $\mathbb{R}^N$, we obtain
\begin{align}\label{L2}
&\frac{1}{2}\frac{d}{dt}\Big(\|\nabla^r\sigma^{k+1}\|_{L^2}^2+\|\nabla^ru^{k+1}\|_{L^2}^2\Big)+\frac{1}{\tau}\|\nabla^ru^{k+1}\|_{L^2}^2\nonumber\\
&\quad=-\int\Big(\nabla^r(u^k\cdot\nabla \sigma^{k+1})\nabla^r\sigma^{k+1}+ \nabla^r(u^k\cdot\nabla u^{k+1})\cdot \nabla^ru^{k+1}\Big)\mathrm{d}x\nonumber\\
&\quad-\frac{1}{\nu}\int\Big( \nabla^r(\sigma^k\nabla\cdot u^{k+1})\nabla^r \sigma^{k+1}+\nabla^r(\sigma^k\nabla \sigma^{k+1})\cdot \nabla^ru^{k+1}\Big)\mathrm{d}x\\
&\quad-a\int \nabla^r_x\Big(\int \mathbf \Gamma(x-y)u^k(x)
(\frac{1}{\nu} \sigma^k(y)+\bar{\kappa})^\nu dy\Big)\cdot \nabla^ru^{k+1}(x)\mathrm{d}x\nonumber\\
&\quad+a\int \nabla^r_x\Big(\int \mathbf \Gamma(x-y)u^k(y)
(\frac{1}{\nu} \sigma^k(y)+\bar{\kappa})^\nu dy\Big)\cdot \nabla^ru^{k+1}(x)\mathrm{d}x\nonumber\\
&\quad=\sum_{i=1}^{4}I_i.
\end{align}
In the following we will estimate $I_i$ term by term. Using the Sobolev embedding theorem and Moser type inequality, we obtain
\begin{align}
{I_1}&=-\int\Big(\nabla^r(u^k\cdot\nabla \sigma^{k+1})\nabla^r\sigma^{k+1}+ \nabla^r(u^k\cdot\nabla u^{k+1} )\cdot\nabla^ru^{k+1}\Big)\mathrm{d}x\nonumber\\
&=-\int u^k\cdot\nabla\nabla^r\sigma^{k+1}\nabla^r\sigma^{k+1}dx- \int \Big(\nabla^r(u^k\cdot\nabla \sigma^{k+1})-u^k\cdot\nabla\nabla^r\sigma^{k+1}\Big)\nabla^r\sigma^{k+1}dx\nonumber\\
&\quad-\int u^k\cdot\nabla\nabla^ru^{k+1}\nabla^ru^{k+1}dx- \int \Big(\nabla^r(u^k\cdot\nabla u^{k+1})-u^k\cdot\nabla\nabla^ru^{k+1}\Big)\nabla^ru^{k+1}dx\nonumber\\
&\leq C\|\nabla\cdot u^{k}\|_{L^{\infty}}(\|\nabla^r\sigma^{k+1}\|_{L^2}^2+\|\nabla^ru^{k+1}\|_{L^2}^2)\nonumber\\
&\quad+\|\nabla^r\sigma^{k+1}\|_{L^2}(\|\nabla^ru^k\|_{L^2}\|\nabla \sigma^{k+1}\|_{L^{\infty}}+\|\nabla^r\sigma^{k+1}\|_{L^2}\|\nabla u^k\|_{L^{\infty}})\nonumber\\
&\quad+\|\nabla^ru^{k+1}\|_{L^2}(\|\nabla^ru^k\|_{L^2}\|\nabla u^{k+1}\|_{L^{\infty}}+\|\nabla^ru^{k+1}\|_{L^2}\|\nabla u^k\|_{L^{\infty}})\nonumber\\
&\leq CM(\|\nabla^r\sigma^{k+1}\|_{L^2}^2+\|\nabla^ru^{k+1}\|_{L^2}^2)+CM(\|\nabla \sigma^{k+1}\|_{H^{s-1}}^2+\|\nabla u^{k+1}\|_{H^{s-1}}^2);\label{bian1}\\
{I_2}&=-\frac{1}{\nu}\int \Big(\nabla^r(\sigma^k\nabla\cdot u^{k+1})\nabla^r \sigma^{k+1}+\nabla^r(\sigma^k\nabla \sigma^{k+1})\cdot \nabla^ru^{k+1}\Big)\mathrm{d}x\nonumber\\
&=-\frac{1}{\nu}\int \sigma^k\nabla\cdot\nabla^r u^{k+1}\nabla^r \sigma^{k+1}\mathrm{d}x-\frac{1}{\nu}\int_{\mathbb{R}^N}\Big(\nabla^r(\sigma^k\nabla\cdot u^{k+1}) -\sigma^k\nabla\cdot\nabla^r u^{k+1}\Big)\nabla^r \sigma^{k+1}\mathrm{d}x\nonumber\\
&\quad-\frac{1}{\nu}\int \sigma^k\nabla\nabla^r \sigma^{k+1}\nabla^r u^{k+1}\mathrm{d}x-\frac{1}{\nu}\int\Big(\nabla^r(\sigma^k\nabla \sigma^{k+1})-\sigma^k\nabla\nabla^r \sigma^{k+1}\Big)\nabla^r u^{k+1}\mathrm{d}x\nonumber\\
&\leq C\|\nabla \sigma^k\|_{L^{\infty}}(\|\nabla^r\sigma^{k+1}\|_{L^2}^2+\|\nabla^ru^{k+1}\|_{L^2}^2)\nonumber\\
&\quad+C\Big(\|\nabla^r \sigma^k\|_{L^2}\|\nabla\cdot u^{k+1}\|_{L^{\infty}}+\|\nabla^r u^{k+1}\|_{L^2}\|\nabla \sigma^k\|_{L^{\infty}}\Big)\|\nabla^r\sigma^{k+1}\|_{L^2}\nonumber\\
&\quad+C\Big(\|\nabla^r \sigma^k\|_{L^2}\|\nabla \sigma^{k+1}\|_{L^{\infty}}+\|\nabla^r \sigma^{k+1}\|_{L^2}\|\nabla \sigma^k\|_{L^{\infty}}\Big)\|\nabla^ru^{k+1}\|_{L^2}\nonumber\\
&\leq CM(\|\nabla^r\sigma^{k+1}\|_{L^2}^2+\|\nabla^ru^{k+1}\|_{L^2}^2)+CM(\|\nabla \sigma^{k+1}\|_{H^{s-1}}^2+\|\nabla u^{k+1}\|_{H^{s-1}}^2),\label{bian2}
\end{align}
where we have used $\|\nabla u^{k+1}\|_{L^{\infty}}\leq\|\nabla u^{k+1}\|_{H^{s-1}}$  and the inductive assumption (\ref{ukm}). 

Next, we estimate the $I_3$. Using Young's inequality and Moser type inequality, we have
\begin{align}\label{3.14}
I_3&=-a\int \nabla_x^r\Big( u^k(x)\int \mathbf \Gamma(x-y)
(\frac{1}{\nu} \sigma^k(y)+\bar{\kappa})^\nu dy\Big)\cdot \nabla^ru^{k+1}(x)\mathrm{d}x\nonumber\\
&\leq C\|\nabla^ru^{k+1}\|_{L^2}\|\nabla^r\Big( u^k\mathbf \Gamma\ast
(\frac{1}{\nu} \sigma^k+\bar{\kappa})^\nu\Big)\|_{L^2}\nonumber\\
&\leq C\|\nabla^ru^{k+1}\|_{L^2}\|\nabla^r u^k\|_{L^2}\|\mathbf \Gamma\ast
(\frac{1}{\nu} \sigma^k+\bar{\kappa})^\nu\|_{L^\infty}+C\|\nabla^ru^{k+1}\|_{L^2}\| u^k\|_{L^\infty}\|\nabla^r\mathbf \Gamma\ast
(\frac{1}{\nu} \sigma^k+\bar{\kappa})^\nu\|_{L^2}.
\end{align}
Applying the Sobolev embedding theorem and the inductive assumption (\ref{ukm}), direct calculation shows
\begin{align}\label{4.14}
\|\mathbf \Gamma \ast (\frac{1}{\nu}\sigma^k+\bar{\kappa})^\nu\|_{L^\infty}&=\|\int \mathbf \Gamma(x-y)(\frac{1}{\nu}\sigma^k(y)+\bar{\kappa})^\nu\mathrm{d}y\|_{L^\infty}\leq\|\mathbf \Gamma\|_{L^1}\|(\frac{1}{\nu} \sigma^k+\bar{\kappa})^\nu\|_{L^\infty}\nonumber\\
&\leq C(\|\sigma^k\|_{L^\infty},\nu,\bar{\kappa})\leq C(M,\nu,\bar{\kappa}).
\end{align}
\begin{align}
\|\nabla^r \mathbf \Gamma \ast (\frac{1}{\nu}{\nu}\sigma^k+\bar{\kappa})^\nu\|_{L^2}&=\|\nabla^r_x\int \mathbf \Gamma(x-y)(\frac{1}{\nu}\sigma^k(y)+\bar{\kappa})^\nu\mathrm{d}y\|_{L^2}=\|\int \mathbf \Gamma(x-y)\nabla_y^r(\frac{1}{\nu}\sigma^k(y)+\bar{\kappa})^\nu\mathrm{d}y\|_{L^2}\nonumber\\
&\leq C\|\mathbf \Gamma\|_{L^1}\|\nabla^r(\frac{1}{\nu} \sigma^k+\bar{\kappa})^\nu \|_{L^2} \leq C(\|\sigma^k\|_{L^\infty},\nu,\bar{\kappa})\|\mathbf \Gamma\|_{L^1}\|\sigma^k\|_{H^{r}}\nonumber\\
&\leq C(M,\nu,\bar{\kappa}),\label{4.16}
\end{align}
where $C(M,\nu,\bar{\kappa})$  is non-decreasing in $M$.

Then, we obtain that
\begin{equation}
I_3\leq C(M,\nu,\bar{\kappa})(\|\nabla^ru^{k+1}\|^2_{L^2}+1).
\end{equation}
Finally, we provide the estimate of $I_4$. By applying the Moser type inequality and Young's inequality, we have
 \begin{align}
 I_4&=a\int \nabla^ru^{k+1}(x)\mathrm{d}x\nabla_x^r\Big(\int \mathbf \Gamma(x-y)u^k(y)
(\frac{1}{\nu} \sigma^k(y)+\bar{\kappa})^\nu dy\Big)\nonumber\\
&=a\int \nabla^ru^{k+1}(x)\mathrm{d}x\int \mathbf \Gamma(x-y)\nabla_y^r \Big(u^k(y)
(\frac{1}{\nu} \sigma^k(y)+\bar{\kappa})^\nu \Big)dy\nonumber\\
&\leq C\|\mathbf \Gamma\|_{L^1}\|\nabla^ru^{k+1}\|_{L^2}\|\nabla^r \Big(u^k(y)
(\frac{1}{\nu} \sigma^k(y)+\bar{\kappa})^\nu \|_{L^2}\nonumber\\
&\leq C\|\mathbf \Gamma\|_{L^1}\|\nabla^ru^{k+1}\|_{L^2}\Big(\|\nabla^ru^{k}\|_{L^2}\|(\frac{1}{\nu} \sigma^k+\bar{\kappa})^\nu\|_{L^\infty}+\|\nabla^r(\frac{1}{\nu} \sigma^k+\bar{\kappa})^\nu\|_{L^2}\|u^{k}\|_{L^\infty}\Big)\nonumber\\
&\leq C(M,\nu,\bar{\kappa})(\|\nabla^ru^{k+1}\|_{L^2}^2+1).
 \end{align}
Here, we used (\ref{4.16}) and the inductive assumption (\ref{ukm}).

Collecting all estimates of $I_i$ from 1 to 4, we obtain that
\begin{align}\label{Hsun1}
\frac{d}{dt}&\Big(\|\nabla^{r}\sigma^{k+1}\|_{L^2}^2+\|\nabla^{r}u^{k+1}\|_{L^2}^2\Big)+\frac{2}{\tau}\|\nabla^ru^{k+1}\|_{L^2}^2\nonumber\\
&\leq C(M,\nu,\bar{\kappa})(\|\nabla^ru^{k+1}\|_{L^2}^2+\|\nabla^r\sigma^{k+1}\|_{L^2}^2+1)+CM(\|\nabla u^{k+1}\|_{H^{s-1}}^2+\|\nabla\sigma^{k+1}\|_{H^{s-1}}^2).
\end{align}
We can sum (\ref{Hsun1}) over $1\leq r\leq s$ and combine  (\ref{L2un}) to obtain
\begin{align}\label{Hsun}
\frac{d}{dt}\Big(\|\sigma^{k+1}\|_{H^s}^2+\|u^{k+1}\|_{H^s}^2\Big)+\frac{2}{\tau}\|\nabla^ru^{k+1}\|_{L^2}^2&\leq C(M,\nu,\bar{\kappa})(\|u^{k+1}\|_{H^s}^2+\|\sigma^{k+1}\|_{H^s}^2)+C(M,\nu,\bar{\kappa}).
\end{align}

This yields
\begin{align*}
\underset{a\leq t\leq T_0}{sup}\|\sigma^{k+1}\|_{H^s}^2&+\|u^{k+1}\|_{H^s}^2+\int_{0}^{T_0}\|\nabla^ru^{k+1}\|_{L^2}^2dt\nonumber\\
&\leq e^{C(M,\nu,\bar{\kappa})T_0}\Big(\|\sigma_0\|_{H^s}^2+\|u_0\|_{H^s}^2\Big)+e^{C(M,\nu,\bar{\kappa})T_0}C(M,\nu,\bar{\kappa})T_0.
\end{align*}
By the choise of $M$ and $T_0$ as in $(\ref{ML})$ and $ (\ref{T0})$, we can easily check that
$$
e^{C(M,\nu,\bar{\kappa})T_0}\Big(\|\sigma_0\|_{H^s}^2+\|u_0\|_{H^s}^2\Big)+e^{C(M,\nu,\bar{\kappa})T_0}C(M,\nu,\bar{\kappa})T_0\leq M^2.
$$
So, we obtain
\begin{equation}\label{ul2}
\|\sigma^{k+1}\|_{H^s}+\|u^{k+1}\|_{H^s}\leq M
\end{equation}
which completes the induction process.
\end{proof}

\subsection{Convergence in lower-order norm} In this subsection, we will show that the  $\{\sigma^k,\ u^k\}_{k=1}^{\infty}$ are convergent in some lower-order Sobolev spaces using the contraction mapping principle.

Let $$\overline{n}^{k+1}=\sigma^{k+1}-\sigma^{k},\ \overline{u}^{k+1}=u^{k+1}-u^{k}.$$ Note that $(\sigma^{k+1},\ u^{k+1})$ and $(\rho^{k},\ u^{k})$ satisfies
\begin{align}
 \partial_t \sigma^{k+1}&+\bar{\kappa}\nabla\cdot u^{k+1} =-u^{k}\cdot \nabla \sigma^{k+1}-\dfrac{1}{\nu}\sigma^{k}\nabla\cdot u^{k+1},\nonumber\\
\partial_t u^{k+1}&+\bar{\kappa}\nabla \sigma^{k+1}+\frac{1}{\tau} u^{k+1}=-u^{k}\cdot \nabla u^{k+1}-\dfrac{1}{\nu}\sigma^{k}\nabla \sigma^{k+1}\label{k+1}\\
&-a\int\mathbf \Gamma(x-y)(u^k(x)-u^k(y))
(\frac{1}{\nu} \sigma^k(y)+\bar{\kappa})^\nu dy\nonumber
\end{align}
as well as
\begin{align}
\partial_t \sigma^{k}&+\bar{\kappa}\nabla\cdot u^{k}+\frac{1}{\tau} u^{k}=-u^{k-1}\cdot \nabla \sigma^{k}-\dfrac{1}{\nu}\sigma^{k-1}\nabla\cdot u^{k},\nonumber\\
\partial_t u^{k}&+\bar{\kappa}\nabla \sigma^{k}=-u^{k-1}\cdot \nabla u^{k}-\dfrac{1}{\nu}\sigma^{k-1}\nabla \sigma^{k}\label{k}\\
&-a\int_{\mathbb{R}^N}\mathbf \Gamma(x-y)(u^{k-1}(x)-u^{k-1}(y))
(\frac{1}{\nu} \sigma^{k-1}(y)+\bar{\kappa})^\nu dy\nonumber
\end{align}
subject to the same initial data
\begin{equation}
( \sigma^{k+1},\ u^{k+1})=( \sigma^k,\ u^k)=( n_0,\ u_0)\in\ H^s.
\end{equation}
 It follows from $(\ref{k+1})$ and $(\ref{k})$ that
\begin{align}
\label{nk+1}&\partial_t ( \sigma^{k+1}- \sigma^k)+\bar{\kappa}\nabla\cdot( u^{k+1}- u^k)=-(u^k-u^{k-1})\nabla \sigma^{k+1}\nonumber\\
&\quad\ \quad\quad-u^{k-1}\nabla(\sigma^{k+1}-\sigma^k)-\frac{1}{\nu}(\sigma^k-\sigma^{k-1})\nabla\cdot u^{k+1}-\frac{1}{\nu}\sigma^{k-1}\nabla\cdot (u^{k+1}-u^k)\\
\label{uk+1}&\partial_t ( u^{k+1}- u^k)+\bar{\kappa}\nabla( \sigma^{k+1}- \sigma^k)+\frac{1}{\tau}(u^{k+1}-u^{k})\nonumber\\
&=-(u^k-u^{k-1})\nabla\cdot u^{k+1}-u^{k-1}\nabla\cdot(u^{k+1}-u^k)
-\frac{1}{\nu}(\sigma^k-\sigma^{k-1})\nabla \sigma^{k+1}-\frac{1}{\nu}\sigma^{k-1}\nabla (\sigma^{k+1}-\sigma^k)\nonumber\\
&\quad-a\int \mathbf \Gamma(x-y)(u^k(x)-u^k(y))\Big((\frac{1}{\nu} \sigma^{k}(y)+\bar{\kappa})^\nu-(\frac{1}{\nu} \sigma^{k-1}(y)+\bar{\kappa})^\nu\Big)dy\nonumber\\
&\quad-a\int \mathbf \Gamma(x-y)\Big(u^k(x)-u^{k-1}(x)\Big)(\frac{1}{\nu} \sigma^{k-1}(y)+\bar{\kappa})^\nu dy\nonumber\\
&\quad-a\int \mathbf \Gamma(x-y)\Big(u^k(y)-u^{k-1}(y)\Big)(\frac{1}{\nu} \sigma^{k-1}(y)+\bar{\kappa})^\nu dy.
\end{align}
Multiplying (\ref{nk+1}) and  (\ref{uk+1}) by $( \sigma^{k+1}- \sigma^k)$, $( u^{k+1}- u^k)$ respectively, summing up and integrating over $\mathbb{R}^N$, similar to the estimate in subsection 3.2, we obtain
\begin{align}\label{ct}
&\frac{d}{dt}\Big(| \sigma^{k+1}- \sigma^k|_{L^2}^2+| u^{k+1}- u^k|_{L^2}^2\Big)+\frac{2}{\tau}\|u^{k+1}- u^k\||_{L^2}^2\nonumber\\
&=-\int (u^k-u^{k-1})\nabla \sigma^{k+1}(\sigma^{k+1}-\sigma^k)+(u^k-u^{k-1})\nabla\cdot u^{k+1}\cdot(u^{k+1}-u^k)\mathrm{d}x\nonumber\\
&\quad-\int u^{k-1}\nabla (\sigma^{k+1}-\sigma^k)(\sigma^{k+1}-\sigma^k)+u^{k-1}\nabla\cdot (u^{k+1}-u^k)\cdot(u^{k+1}-u^k)\mathrm{d}x\nonumber\\
&\quad-\frac{1}{\nu}\int (\sigma^{k}-\sigma^{k-1})\nabla \cdot u^{k+1}(\sigma^{k+1}-\sigma^k)+(\sigma^{k}-\sigma^{k-1})\nabla \sigma^{k+1}(u^{k+1}-u^k) \mathrm{d}x\nonumber\\ &\quad-\frac{1}{\nu}\int \sigma^{k-1}\nabla \cdot (u^{k+1}-u^k)(\sigma^{k+1}-\sigma^k)+\sigma^{k-1}\nabla (\sigma^{k+1}-\sigma^k)(u^{k+1}-u^k) \mathrm{d}x\nonumber\\
&\quad-a\int (u^{k+1}-u^k) \mathrm{d}x\int \mathbf \Gamma(x-y)(u^k(x)-u^k(y)\Big((\frac{1}{\nu} \sigma^{k}(y)+\bar{\kappa})^\nu-(\frac{1}{\nu} \sigma^{k-1}(y)+\bar{\kappa})^\nu\Big)\mathrm{d}y\nonumber\\
&\quad-a\int (u^{k+1}-u^k) \mathrm{d}x\int \mathbf \Gamma(x-y)(u^k(x)-u^{k-1}(x))(\frac{1}{\nu} \sigma^{k-1}(y)+\bar{\kappa})^\nu dy\nonumber\\
&\quad-a\int (u^{k+1}-u^k) \mathrm{d}x\int \mathbf \Gamma(x-y)(u^k(y)-u^{k-1}(y))(\frac{1}{\nu} \sigma^{k-1}(y)+\bar{\kappa})^\nu dy\nonumber\\
&\leq C(M,\nu,\bar{\kappa})\Big(\| \sigma^{k+1}- \sigma^k\|_{L^2}^2+\| u^{k+1}- u^k\|_{L^2}^2\Big)+C(M,\nu,\bar{\kappa})\Big(\| \sigma^{k}- \sigma^{k-1}\|_{L^2}^2+\| u^{k}- u^{k-1}\|_{L^2}^2\Big),
\end{align}
where we use the following estimate, for $a.e.(t,x)\in (0,+\infty)\times \mathbb{R}^N$
\begin{align}\label{mean}
\Big|(\frac{1}{\nu} \sigma^{k}+\bar{\kappa})^\nu-(\frac{1}{\nu} \sigma^{k-1}+\bar{\kappa})^\nu\Big|&=\Big|\int_{0}^{1}\Big(\frac{s}{\nu}\sigma^k+\frac{(1-s)}{\nu}\sigma^{k-1}+\bar{\kappa}\Big)^{\nu-1}(\sigma^k-\sigma^{k-1})\mathrm{d}s\Big|\nonumber\\
&\leq C(M,\nu,\bar{\kappa})|\sigma^k-\sigma^{k-1}|.
\end{align}

We can integrate (\ref{ct}) over $(0,\ t)$ to obtain
\begin{align}\label{uh}
\underset{0\leq\tilde{t}\leq t}{\sup}\|\sigma^{k+1}- \sigma^k\|_{L^2}^2+\| u^{k+1}- u^k\|_{L^2}^2&\leq C\int_{0}^{t}\|\sigma^{k+1}(\tilde{t})- \sigma^k(\tilde{t})\|_{L^2}^2+\| u^{k+1}(\tilde{t})- u^k(\tilde{t})\|_{L^2}^2 \mathrm{d}\tilde{t}\nonumber\\
&+C\int_{0}^{t}\| \sigma^{k}(\tilde{t})- \sigma^{k-1}(\tilde{t})\|_{L^2}^2+\| u^{k}(\tilde{t})- u^{k-1}(\tilde{t})\|_{L^2}^2 \mathrm{d}\tilde{t}.
\end{align}

The we sum up for $k=1,\ 2,\ \cdots$ together with the Gronwall's inequality to obtain
\begin{align}\label{conver}
\sum_{k=1}^{\infty}\|\sigma^{k+1}- \sigma^k\|_{L^2}^2+\| u^{k+1}- u^k\|_{L^2}^2\leq C, \quad for \ t\leq T_0.
\end{align}
This implies that ${ \sigma^k}$ and ${u^k}$ are Cauchy sequences in $C([0,
 T_0];\ L^2)$ .
\subsection{The proof of Theorem \ref{local} }In this subsection, we will prove the local well-posedness of the system (\ref{eqn1a})-(\ref{eqn2a}) given in Theorem 2.1. First, we prove the existence of classical solutions.

By the Gagliardo-Nirenberg inequality together with the uniform bound of $( \sigma^k,\ u^k)$ and  the convergence result (\ref{conver}), we can conclude that, for $s>\frac{N}{2}+1$,
\begin{align}\label{conv}
 \sigma^{k}\rightarrow \sigma\in C([0,\ T_0];\ H^{s-1}) \ and \ u^{k}\rightarrow u\in C([0,\ T_0];\ H^{s-1}).
\end{align}
It easily follows from (\ref{conv}) that limit function $( \sigma,\ u)$ is a solution to (\ref{eqn1a})-(\ref{eqn2a}) in a distributional sense.
Using a similar argument as in \cite{HKK}, we can obtain the regularity of $( \sigma,\ u)$:
\begin{align}\label{conv1}
 (\sigma,u)\in C([0,\ T_0];\ H^s).
\end{align}
Applying Sobolev's embedding theorem, we prove $( \sigma,\ u )\in C^{1}([0,\ T_0]\times \mathbb{R}^N)$ is a classical solution.

Next, we prove the uniqueness. Let $( \sigma,\ u)$ and $(\widetilde{ \sigma},\ \widetilde{u})$
be the two classical solutions of $(\ref{eqn1a})-(\ref{eqn2a})$ corresponding the same initial data $(\sigma_0,\ u_0)$. We set
$$
U(t)=\|\sigma-\widetilde{ \sigma}\|_{L^2}^2+\|u-\widetilde{u}\|_{L^2}^2.
$$
Then, by the same argument as in subsection 4.1, $U(t)$ satisfies Gronwall's inequality:
$$
U(t)\leq C\int_{0}^tU(t)dt,\quad U(0)=0.
$$
This yields that
$$
 \sigma\equiv\widetilde{ \sigma},\ u\equiv\widetilde{u}\in C([0,\ T];\ L^{2}( \mathbb{R}^N)).
$$
So, we complete the proof of theorem.
\section{Global existence of classical solution}
In this section, we discuss the global existence of the classical solution  on the basis of the local existence results in Section 3. According to Remark \ref{rem}, we assume that the background density and the bottom viscous damping satisfy

\begin{equation}\label{mu}
2a\bar{\kappa}^\nu\|\mathbf \Gamma\|_{L^1}<\frac{1}{\tau}.
\end{equation}

\subsection{A priori esimates}
In this subsection, we will provide the a priori estimates for  the  Cauchy problem (\ref{eqn1a})-(\ref{eqn3a}).
Hence, we assume a priori assumption that for $s>\frac{N}{2}$ and a sufficiently small $\delta>0$,
\begin{equation}\label{xiao}
\underset{{0\leq t\leq T_0}}{\sup}\Big(\|\sigma\|_{H^s}+\|u\|_{H^s} \Big)\leq \delta.
\end{equation}



We show the $L^2$-norm estimates which contains the dissipation estimate for $u$. It should be noticed that there is no dissipation estimate of $\|\sigma\|_{L^2}$.

\begin{lem}\label{5.1}Assume $(\sigma,u)$ are classical solution of $(\ref{eqn1a})-(\ref{eqn2a})$ and $(\ref{mu}), (\ref{xiao})$ hold, then we have
\begin{equation}\label{gul2}
\frac{1}{2}\frac{d}{dt}\Big(\|\sigma\|_{L^2}^2+\|u\|_{L^2}^2\Big)+(\frac{1}{\tau}-2a\bar{\kappa}^\nu\|\mathbf \Gamma\|_{L^1})\|u\|_{L^2}^2\leq C\delta (\|u\|_{L^2}^2+\|\nabla \sigma\|_{L^2}^2).
\end{equation}
\end{lem}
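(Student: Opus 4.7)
The plan is to multiply the $\sigma$-equation by $\sigma$ and the $u$-equation by $u$, sum, and integrate over $\mathbb{R}^N$, then track how each term either contributes to the claimed constant $\tfrac{1}{\tau}-2a\bar\kappa^\nu\|\mathbf{\Gamma}\|_{L^1}$ or can be swallowed into $C\delta(\|u\|_{L^2}^2+\|\nabla\sigma\|_{L^2}^2)$.

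First I would dispatch the easy pieces. The $\bar\kappa$-flux terms $\bar\kappa\int(\sigma\nabla\cdot u+u\cdot\nabla\sigma)\,dx=\bar\kappa\int\nabla\cdot(\sigma u)\,dx$ vanish by integration by parts. The pure transport $\int(u\cdot\nabla u)\cdot u\,dx=-\tfrac12\int(\nabla\cdot u)|u|^2\,dx$ is controlled by $\|\nabla u\|_{L^\infty}\|u\|_{L^2}^2\le C\delta\|u\|_{L^2}^2$ via the a priori smallness $(\ref{xiao})$ and Sobolev embedding. For the cross term $\int\sigma u\cdot\nabla\sigma\,dx$ I would \emph{not} integrate by parts (that would produce $\|\sigma\|_{L^2}^2$, which is not in the RHS); instead I would use $|\int\sigma u\cdot\nabla\sigma\,dx|\le\|\sigma\|_{L^\infty}\|u\|_{L^2}\|\nabla\sigma\|_{L^2}\le C\delta(\|u\|_{L^2}^2+\|\nabla\sigma\|_{L^2}^2)$. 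The mixed quadratic $\tfrac1\nu\int(\sigma^2\nabla\cdot u+\sigma u\cdot\nabla\sigma)\,dx$ I would simplify using $\sigma\nabla\sigma=\tfrac12\nabla\sigma^2$ so that it collapses (after one integration by parts) to $-\tfrac{1}{\nu}\int\sigma u\cdot\nabla\sigma\,dx$, which is then treated exactly as above.

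The main obstacle, and the whole reason for the threshold in $(\ref{mu})$, is the velocity alignment term
\[
J:=-a\int\!\!\!\int u(x)\cdot\mathbf{\Gamma}(x-y)\bigl(u(x)-u(y)\bigr)\bigl(\tfrac{1}{\nu}\sigma(y)+\bar\kappa\bigr)^{\nu}\,dy\,dx.
\]
The plan is to split $(\tfrac{1}{\nu}\sigma(y)+\bar\kappa)^\nu=\bar\kappa^\nu+\bigl[(\tfrac{1}{\nu}\sigma(y)+\bar\kappa)^\nu-\bar\kappa^\nu\bigr]$. For the constant piece, using $\int\mathbf{\Gamma}(x-y)\,dy=\int\mathbf{\Gamma}$ and Young's convolution inequality $\|\mathbf{\Gamma}*u\|_{L^2}\le\|\mathbf{\Gamma}\|_{L^1}\|u\|_{L^2}$, the two resulting integrals are each bounded by $\|\mathbf{\Gamma}\|_{L^1}\|u\|_{L^2}^2$, giving a total contribution of at most $2a\bar\kappa^\nu\|\mathbf{\Gamma}\|_{L^1}\|u\|_{L^2}^2$. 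This is precisely the quantity that $(\ref{mu})$ forces to be strictly less than $\tfrac{1}{\tau}$; it is absorbed into the LHS dissipation. For the remainder piece I would invoke the mean-value bound $|(\tfrac{1}{\nu}\sigma(y)+\bar\kappa)^\nu-\bar\kappa^\nu|\le C(\bar\kappa,\nu)|\sigma(y)|\le C\delta$ used already in $(\ref{mean})$, reducing the remainder to $C\delta\|\mathbf{\Gamma}\|_{L^1}\|u\|_{L^2}^2$, which is of the $C\delta\|u\|_{L^2}^2$ type.

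Collecting all contributions gives exactly
\[
\tfrac{1}{2}\tfrac{d}{dt}\bigl(\|\sigma\|_{L^2}^2+\|u\|_{L^2}^2\bigr)+\tfrac{1}{\tau}\|u\|_{L^2}^2\le C\delta\bigl(\|u\|_{L^2}^2+\|\nabla\sigma\|_{L^2}^2\bigr)+2a\bar\kappa^\nu\|\mathbf{\Gamma}\|_{L^1}\|u\|_{L^2}^2,
\]
and moving the last term to the LHS yields $(\ref{gul2})$. The only delicate step is the careful extraction of the sharp constant $2a\bar\kappa^\nu\|\mathbf{\Gamma}\|_{L^1}$ from the alignment integral; the rest is standard energy bookkeeping, but one must resist the reflex to integrate by parts on terms containing a bare $\sigma^2$, since no $\|\sigma\|_{L^2}^2$ dissipation is available.
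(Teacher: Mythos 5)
Your proposal is correct and follows essentially the same route as the paper's proof: the same multiplication by $\sigma$ and $u$, the same direct bound $|\int\sigma u\cdot\nabla\sigma\,dx|\le\|\sigma\|_{L^\infty}\|u\|_{L^2}\|\nabla\sigma\|_{L^2}$ to avoid introducing $\|\sigma\|_{L^2}^2$, and, crucially, the same splitting of the alignment kernel into $\bar\kappa^\nu$ plus a mean-value remainder, with the constant part contributing exactly $2a\bar\kappa^\nu\|\mathbf{\Gamma}\|_{L^1}\|u\|_{L^2}^2$ via Young's convolution inequality and the remainder absorbed into $C\delta\|u\|_{L^2}^2$. The only cosmetic difference is a sign convention in your simplification of the $I_2$ integrand; the resulting bound is the same as the paper's.
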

\begin{proof}
We multiply (\ref{eqn1a}) and (\ref{eqn2a}) by $\sigma$ and $u$ respectively, sum up and integrate over $\mathbb{R}^N$, we obtain
\begin{align}\label{gul}
\frac{1}{2}\frac{d}{dt}\int \Big(|\sigma|^2&+|u^2|\Big)\mathrm{d}x+\frac{1}{\tau}\|u\|_{L^2}^2\nonumber\\
&=-\int \Big(u\cdot \nabla \sigma\sigma+u\cdot \nabla u\cdot u\Big)\mathrm{d}x-\frac{1}{\nu}\int \Big(\sigma\nabla\cdot u \sigma+\sigma\nabla \sigma\cdot u\Big)\mathrm{d}x\nonumber\\
&\quad-a\int \int \mathbf \Gamma(x-y)(u(x)-u(y))(\frac{1}{\nu}\sigma(y)+\bar{\kappa})^\nu\mathrm{d}y\cdot u(x)\mathrm{d}x\nonumber\\
&=I_1+I_2+I_3.
\end{align}
We estimate $I_i$ item by item. Using Young's inequality, we have
\begin{align}
I_1&=-\int \Big(u\cdot \nabla \sigma \sigma+u\cdot \nabla u\cdot u\Big)\mathrm{d}x=-\int u\cdot \nabla \sigma \sigma\mathrm{d}x+\frac{1}{2}\int |u|^2\nabla\cdot u\mathrm{d}x\nonumber\\
&\leq C\|\sigma\|_{L^\infty}\|u\|_{L^2}\|\nabla \sigma\|_{L^2}+C\|\nabla\cdot u\|_{L^\infty}\|u\|^2_{L^2}\nonumber\\
&\leq C\delta(\|\nabla \sigma\|_{L^2}^2+\|u\|^2_{L^2}),\label{1111}\\
I_2&=-\frac{1}{\nu}\int \Big(\sigma\nabla\cdot u \sigma+\sigma\nabla \sigma\cdot u\Big)\mathrm{d}x=\frac{1}{\nu}\int \sigma u\cdot \nabla \sigma\mathrm{d}x
\leq C\|\sigma\|_{L^\infty}\|u\|_{L^2}\|\nabla \sigma\|_{L^2}\nonumber\\
&\leq C\delta(\|\nabla \sigma\|_{L^2}^2+\|u\|^2_{L^2});\\
I_3&=-a\int u(x)\mathrm{d}x \int \mathbf \Gamma(x-y)(u(x)-u(y))(\frac{1}{\nu}\sigma(y)+\bar{\kappa})^\nu\mathrm{d}y \nonumber\\
&=-a\int u^2(x)\int \mathbf \Gamma(x-y)\Big((\frac{1}{\nu}\sigma(y)+\bar{\kappa})^\nu-\bar{\kappa}^\nu\Big)\mathrm{d}y\mathrm{d}x \nonumber\\
&\quad +a\int u(x)\int \mathbf \Gamma(x-y)u(y)\Big((\frac{1}{\nu}\sigma(y)+\bar{\kappa})^\nu-\bar{\kappa}^\nu\Big)\mathrm{d}y\mathrm{d}x \nonumber\\
&\quad-a\bar{\kappa}^\nu\int u(x)\mathrm{d}x \int \mathbf \Gamma(x-y)(u(x)-u(y))\mathrm{d}y\nonumber\\
&\leq 2a\|\mathbf \Gamma\|_{L^1}\|(\frac{1}{\nu}\sigma+\bar{\kappa})^\nu-\bar{\kappa}^\nu\|_{L^\infty}\|u\|_{L^2}^2+2a\kappa^\nu\|\mathbf \Gamma\|_{L^1}\|u\|_{L^2}^2.\nonumber
\end{align}
Similar to (\ref{mean}), we can get
\begin{align}
\|(\frac{1}{\nu}\sigma+\bar{\kappa})^\nu-\bar{\kappa}^\nu\|_{L^\infty}\leq C(\|\sigma\|_{H^{s-1}},\nu,\bar{\kappa}) \|\sigma\|_{L^\infty}\leq C(\|\sigma\|_{H^{s-1}},\nu,\bar{\kappa}) \delta,
\end{align}
with the help of the Sobolev embedding theorem.
Then, we obtain that
\begin{align}
I_3\leq C\delta\|u\|_{L^2}^2+2a\bar{\kappa}^\nu\|\mathbf \Gamma\|_{L^1 }\|u\|_{L^2}^2.\label{222}
\end{align}

Collecting estimates (\ref{1111})-(\ref{222}) into (\ref{gul}), we obtain (\ref{gul2}).
\end{proof}

Next,we provide the high order energy estimates which contains the dissipation estimate for $u$.
\begin{lem}\label{5.2}Assume $1\leq r \leq s$  and $(\ref{mu}), (\ref{xiao})$ hold, then for $s\geq\frac{N}{2}+1$, we have
\begin{equation}\label{guuh}
\frac{1}{2}\frac{d}{dt}\Big(\|\nabla^r\sigma\|_{L^2}^2+\|\nabla^ru\|_{L^2}^2\Big)+(\frac{1}{\tau}-2a\bar{\kappa}^\nu\|\mathbf \Gamma\|_{L^1})\|\nabla^ru\|_{L^2}^2\leq C\delta (\|\nabla^ru\|_{L^2}^2+\|\nabla^r \sigma\|_{L^2}^2)+C\delta \| u\|_{H^{s-1}}^2.
\end{equation}
\end{lem}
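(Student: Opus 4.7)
The plan is to differentiate the symmetric system (\ref{eqn1a})-(\ref{eqn2a}) by $\nabla^r$ for $1\le r\le s$, pair the resulting equations with $\nabla^r\sigma$ and $\nabla^r u$ respectively, then sum and integrate over $\mathbb{R}^N$. Thanks to the symmetrization carried out in Section 2, the principal $\bar{\kappa}$-cross terms cancel under integration by parts, leaving the damping dissipation $\frac{1}{\tau}\|\nabla^r u\|_{L^2}^2$ on the left together with three groups of integrals on the right: the local convection terms $J_1$ from $u\cdot\nabla\sigma$ and $u\cdot\nabla u$, the local mixed terms $J_2$ from $\frac{1}{\nu}\sigma\nabla\cdot u$ and $\frac{1}{\nu}\sigma\nabla\sigma$, and the nonlocal alignment contribution $J_3$. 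I would handle $J_1$ and $J_2$ exactly as in the proof of Lemma \ref{yl3.1}, writing $\nabla^r(u\cdot\nabla\sigma)=u\cdot\nabla\nabla^r\sigma+[\nabla^r,u]\cdot\nabla\sigma$ (and similarly for the other bilinears), integrating by parts on the top-order transport piece, and applying Kato--Ponce / Moser commutator estimates to the remainder. The crucial difference with the local analysis is that the $L^\infty$ prefactors $\|\nabla u\|_{L^\infty}$, $\|\sigma\|_{L^\infty}$, $\|\nabla\sigma\|_{L^\infty}$ are now controlled via Sobolev embedding ($s>\frac{N}{2}+1$) and the smallness assumption (\ref{xiao}) by $C\delta$ rather than by a fixed $M$, so $|J_1|+|J_2|\le C\delta(\|\nabla^r\sigma\|_{L^2}^2+\|\nabla^r u\|_{L^2}^2)+C\delta\|u\|_{H^{s-1}}^2$, with the lower-order remainder arising from intermediate Leibniz distributions balanced by Young's inequality.

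The essential step is $J_3$. Writing $(\tfrac{1}{\nu}\sigma+\bar{\kappa})^\nu=\bar{\kappa}^\nu+h(\sigma)$ with $h(\sigma):=(\tfrac{1}{\nu}\sigma+\bar{\kappa})^\nu-\bar{\kappa}^\nu$, and using $\nabla^r(\mathbf{\Gamma}\ast g)=\mathbf{\Gamma}\ast\nabla^r g$ together with the fact that $\mathbf{\Gamma}\ast 1=\int\mathbf{\Gamma}(z)\,dz$ is a constant matrix of operator norm at most $\|\mathbf{\Gamma}\|_{L^1}$, the $\nabla^r$-derivative of the nonlocal forcing decomposes as
\[
-a\bar{\kappa}^\nu\big[(\mathbf{\Gamma}\ast 1)\nabla^r u-\mathbf{\Gamma}\ast\nabla^r u\big]-a\big[\nabla^r\!\big(u(\mathbf{\Gamma}\ast h(\sigma))\big)-\mathbf{\Gamma}\ast\nabla^r(u\,h(\sigma))\big].
\]
Pairing the first bracket with $\nabla^r u$ and invoking Young's convolution inequality yields the sharp bound $2a\bar{\kappa}^\nu\|\mathbf{\Gamma}\|_{L^1}\|\nabla^r u\|_{L^2}^2$, which is exactly the constant to be subtracted from $\frac{1}{\tau}$ under threshold (\ref{mu}). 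For the $h$-dependent bracket, the mean-value identity (\ref{mean}) already used in Lemma \ref{5.1} gives $\|h(\sigma)\|_{L^\infty}\le C(\delta,\nu,\bar{\kappa})\delta$, Moser's inequality gives $\|\nabla^r h(\sigma)\|_{L^2}\le C\|\nabla^r\sigma\|_{L^2}$, and a Leibniz expansion of $\nabla^r\!\big(u(\mathbf{\Gamma}\ast h)\big)$ and $\nabla^r(uh)$ bounds this remainder by $C\delta(\|\nabla^r\sigma\|_{L^2}^2+\|\nabla^r u\|_{L^2}^2)+C\delta\|u\|_{H^{s-1}}^2$, yielding (\ref{guuh}) after combining with $J_1,J_2$ and the damping term.

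The main obstacle is precisely this sharp splitting of $J_3$: extracting the exact coefficient $2a\bar{\kappa}^\nu\|\mathbf{\Gamma}\|_{L^1}$ without paying a larger constant is what makes the threshold (\ref{mu}) sufficient to yield positive net dissipation $\big(\frac{1}{\tau}-2a\bar{\kappa}^\nu\|\mathbf{\Gamma}\|_{L^1}\big)\|\nabla^r u\|_{L^2}^2$ on the left. Any wasteful Young-with-$\varepsilon$ bound would destroy the threshold condition; it is only by exploiting the antisymmetric structure $u(x)-u(y)$ of the alignment term, together with the identity $\mathbf{\Gamma}\ast 1=\int\mathbf{\Gamma}$, that the leading piece of $J_3$ reduces to the difference $(\mathbf{\Gamma}\ast 1)\nabla^r u-\mathbf{\Gamma}\ast\nabla^r u$ on which Young's convolution inequality delivers the optimal coefficient $2a\bar{\kappa}^\nu\|\mathbf{\Gamma}\|_{L^1}$.
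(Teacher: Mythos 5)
Your proposal is correct and follows essentially the same strategy as the paper: differentiate by $\nabla^r$, pair with $\nabla^r u$, handle the local terms $J_1, J_2$ via Moser commutator estimates with the $L^\infty$ prefactors controlled by $\delta$, and isolate the top-order piece of the nonlocal term so that Young's convolution inequality delivers the sharp coefficient $2a\bar{\kappa}^\nu\|\mathbf\Gamma\|_{L^1}$ (one factor $a\bar{\kappa}^\nu\|\mathbf\Gamma\|_{L^1}$ from each of the $u(x)$ and $u(y)$ contributions). The only stylistic difference is that you split $(\tfrac{1}{\nu}\sigma+\bar\kappa)^\nu=\bar\kappa^\nu+h(\sigma)$ \emph{before} applying $\nabla^r$, whereas the paper keeps the composite kernel together, extracts the commutator first, and only then splits off $\bar\kappa^\nu\|\mathbf\Gamma\|_{L^1}$ inside the $L^\infty$ bound of $\mathbf\Gamma\ast(\tfrac{1}{\nu}\sigma+\bar\kappa)^\nu$; the two yield identical estimates (your intermediate claim $\|\nabla^r h(\sigma)\|_{L^2}\le C\|\nabla^r\sigma\|_{L^2}$ should more precisely read $\le C(\|\sigma\|_{L^\infty})\|\sigma\|_{H^r}$, but since $\|\sigma\|_{H^r}\le\delta$ this does not affect the conclusion).
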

\begin{proof}
For $1\leq r\leq s$, we apply $\nabla^r$ to (\ref{eqn1a}), (\ref{eqn2a}), and multiply the resulting identities by $\nabla^r \sigma,\ \nabla^r u$ respectively, sum up and integrating over $\mathbb{R}^N$ to obtain
\begin{align}\label{guh}
\frac{1}{2}\frac{d}{dt}\int\Big( |\nabla^r\sigma|^2&+|\nabla^ru|^2\Big)\mathrm{d}x+\frac{1}{\tau}\|\nabla^ru\|_{L^2}^2\nonumber\\
&=-\int \Big( \nabla^r(u\cdot \nabla \sigma)\nabla^r \sigma+\nabla^r(u\cdot \nabla u)\cdot \nabla^ru\Big)\mathrm{d}x\nonumber\\
&\quad-\frac{1}{\nu}\int\Big( \nabla^r(\sigma\nabla\cdot u)\nabla^r \sigma+\nabla^r(\sigma\nabla \sigma)\cdot \nabla^ru\Big)\mathrm{d}x\nonumber\\
&\quad-a\int \nabla_x^r\Big(\int \mathbf \Gamma(x-y)u(x)(\frac{1}{\nu}\sigma(y)+\bar{\kappa})^\nu\mathrm{d}y\Big)\cdot \nabla^ru(x)\mathrm{d}x\nonumber\\&\quad+a\int \nabla_x^r\Big(\int \mathbf \Gamma(x-y)u(y)(\frac{1}{\nu}\sigma(y)+\bar{\kappa})^\nu\mathrm{d}y\Big)\cdot \nabla^ru(x)\mathrm{d}x\nonumber\\
&=\sum_{i=1}^{4}I_i.
\end{align}
Similar to the estimate of (\ref{bian1}) and (\ref{bian2}) in Section 3, by H${\rm\ddot{o}}$lder's inequality and Moser type inequality, we have
\begin{align}
I_1&=-\int \nabla^r(u\cdot \nabla \sigma)\nabla^r \sigma+\nabla^r(u\cdot \nabla u)\cdot \nabla^ru\mathrm{d}x
\leq C\delta \Big(\|\nabla^r\sigma\|_{L^2}^{2}+\|\nabla^ru\|_{L^2}^{2}\Big),\label{i1}\\
I_2&=-\frac{1}{\nu}\int \nabla^r(\sigma\nabla\cdot u)\nabla^r \sigma+\nabla^r(\sigma\nabla \sigma)\cdot \nabla^ru\mathrm{d}x
\leq C\delta \Big(\|\nabla^r\sigma\|_{L^2}^{2}+\|\nabla^ru\|_{L^2}^{2}\Big)\label{i2}.
\end{align}

Next, we estimate $I_3$. 
Applying Moser type inequality and the H${\rm \ddot{o}}$lder inequality we have
\begin{align}
I_3&=-a\int\nabla^ru(x) \nabla_x^r\Big(\int \mathbf \Gamma(x-y)u(x)(\frac{1}{\nu}\sigma(y)+\bar{\kappa})^\nu\mathrm{d}y\Big)\cdot \mathrm{d}x\nonumber\\
&=-a\int\nabla^ru(x) \nabla^r \Big(u(x)\mathbf \Gamma \ast (\frac{1}{\nu}\sigma+\bar{\kappa})^\nu(x)\Big) \mathrm{d}x\nonumber\\
&=-a\int\nabla^ru(x) \mathbf \Gamma \ast (\frac{1}{\nu}\sigma+\bar{\kappa})^\nu(x)\nabla^r u(x)\mathrm{d}x\nonumber\\
&\quad-a\int\nabla^ru(x)\Big( \nabla^r (u(x)\mathbf \Gamma \ast (\frac{1}{\nu}\sigma+\bar{\kappa})^\nu(x))- \mathbf \Gamma \ast (\frac{1}{\nu}\sigma+\bar{\kappa})^\nu(x)\nabla^r u(x)\Big)\mathrm{d}x\nonumber\\
&\leq a\|\mathbf \Gamma \ast (\frac{1}{\nu}\sigma+\bar{\kappa})^\nu\|_{L^\infty}\|\nabla^ru\|_{L^2}^2\nonumber\\
&\quad+C\|\nabla^ru\|_{L^2}\Big(\|u\|_{L^\infty}\|\nabla^r\mathbf \Gamma \ast (\frac{1}{\nu}\sigma+\bar{\kappa})^\nu\|_{L^2}+\|\nabla \mathbf \Gamma \ast (\frac{1}{\nu}\sigma+\bar{\kappa})^\nu\|_{L^\infty}\|\nabla^{r-1}u\|_{L^2}\Big).
\label{I3z}
\end{align}
To deal with the dissipation of $u$, we need the following estimates. Similar to (\ref{mean}), we can get
\begin{align}\label{4.141}
\|\mathbf \Gamma \ast (\frac{1}{\nu}\sigma+\bar{\kappa})^\nu\|_{L^\infty}&=\|\int \mathbf \Gamma(x-y)(\frac{1}{\nu}\sigma(y)+\bar{\kappa})^\nu\mathrm{d}y\|_{L^\infty}\nonumber\\
&\leq\|\int \mathbf \Gamma(x-y)\Big((\frac{1}{\nu}\sigma+\bar{\kappa})^\nu-\bar{\kappa}^\nu\Big)\mathrm{d}y\|_{L^\infty}+\|\int\bar{\kappa}^\nu \mathbf \Gamma(x-y)\mathrm{d}y\|_{L^\infty}\nonumber\\
&\leq C(\|\sigma\|_{L^{\infty}}, \nu,\bar{\kappa})\|\mathbf \Gamma\|_{L^1}\|\sigma\|_{L^{\infty}}+\bar{\kappa}^\nu\|\mathbf \Gamma\|_{L^1}.
\end{align}
Using the differential properties of the convolution and the Sobolev embedding theorem, we can compute
\begin{align}
\|\nabla \mathbf \Gamma \ast ({\nu}\sigma+\bar{\kappa})^\nu\|_{L^\infty}&=\|\nabla_x\int \mathbf \Gamma(x-y)(\frac{1}{\nu}\sigma(y)+\bar{\kappa})^\nu\mathrm{d}y\|_{L^\infty}=\|\int \mathbf \Gamma(x-y)\nabla_y(\frac{1}{\nu}\sigma(y)+\bar{\kappa})^\nu\mathrm{d}y\|_{L^\infty}\nonumber\\
&\leq\|\mathbf \Gamma\|_{L^1}\|\frac{1}{\nu}\sigma(y)+\bar{\kappa})^{\nu-1}\|_{L^{\infty}}\|\nabla\sigma\|_{L^\infty}\nonumber\\
&\leq C(\|\sigma\|_{L^{\infty}}, \nu,\bar{\kappa})\|\mathbf \Gamma\|_{L^1}\|\nabla \sigma\|_{H^{s-1}},\label{4.15}
\end{align}
and similar to (\ref{4.16}), we have
\begin{align}
\|\nabla^r \mathbf \Gamma \ast ({\nu}\sigma+\bar{\kappa})^\nu\|_{L^2}
\leq C(\|\sigma\|_{L^{\infty}}, \nu,\bar{\kappa})\|\mathbf \Gamma\|_{L^1}\|\sigma\|_{H^{r}},\label{4.161}
\end{align}
where $C(\|\sigma\|_{L^{\infty}}, \nu,\bar{\kappa})$ is non-decreasing in $\|\sigma\|_{L^{\infty}}$.

Substituting (\ref{4.141})-(\ref{4.161}) for (\ref{I3z}), we obtain that
\begin{align}
I_3&\leq C\Big(\|\sigma\|_{L^{\infty}}\|\nabla^ru\|_{L^2}^2+\|\sigma\|_{ H^{r}}\|u\|_{H^{s-1}}\|\nabla^ru\|_{L^2}+\|\nabla \sigma\|_{H^{s-1}}\|\nabla^{r-1}u\|_{L^2}\|\nabla^ru\|_{L^2}\Big)\nonumber\\
&\quad+a\bar{\kappa}^\nu\|\mathbf \Gamma\|_{L^1}\|\nabla^ru\|_{L^2}^2\nonumber\\
&\leq C\delta(\|\nabla^ru\|_{L^2}^2+\|u\|_{H^{s-1}}^2)+a\bar{\kappa}^\nu\|\mathbf \Gamma\|_{L^1}\|\nabla^ru\|_{L^2}^2,\label{I3}
\end{align}
where  the Sobolev embedding theorem is used.

Similar to estimate of $I_3$, we can deduce that
\begin{align}\label{I4z}
I_4&=a\int\nabla^ru(x) \nabla_x^r\Big(\int \mathbf \Gamma(x-y)u(y)(\frac{1}{\nu}\sigma(y)+\bar{\kappa})^\nu\mathrm{d}y\Big) \mathrm{d}x\nonumber\\
&=a\int\nabla^ru(x)\int \mathbf \Gamma(x-y)\nabla_y^r \Big(u(y)(\frac{1}{\nu}\sigma(y)+\bar{\kappa})^\nu\Big)\mathrm{d}y\mathrm{d}x\nonumber\\
&=a\int\nabla^ru(x)\int \mathbf \Gamma(x-y)\nabla_y^r \Big(u(y)(\frac{1}{\nu}\sigma(y)+\bar{\kappa})^\nu\Big)\mathrm{d}y\mathrm{d}x\nonumber\\
&=a\int\nabla^ru(x)\int \mathbf \Gamma(x-y)\nabla_y^r u(y)(\frac{1}{\nu}\sigma(y)+\bar{\kappa})^\nu\mathrm{d}y\mathrm{d}x\nonumber\\
&\quad+a\int\nabla^ru(x)\int \mathbf \Gamma(x-y)(\nabla_y(u(y)(\frac{1}{\nu}\sigma(y)+\bar{\kappa})^\nu)-\nabla_y^r u(y)(\frac{1}{\nu}\sigma(y)+\bar{\kappa})^\nu \Big)\mathrm{d}y\mathrm{d}x\nonumber\\
&\leq C\delta(\|\nabla^ru\|_{L^2}^2+\|u\|_{H^{s-1}}^2)+a\bar{\kappa}^\nu\|\mathbf \Gamma\|_{L^1}\|\nabla^ru\|_{L^2}^2.
\end{align}

Collecting estimates (\ref{i1}), (\ref{i2}), (\ref{I3}), (\ref{I4z}) and put them into  (\ref{guh}), we obtain that
\begin{align}
\frac{1}{2}\frac{d}{dt}\Big(\|\nabla^r\sigma\|_{L^2}^2+\|\nabla^ru\|_{L^2}^2\Big)&+(\frac{1}{\tau}-2a\bar{\kappa}^\nu\|\mathbf \Gamma\|_{L^1 })\|\nabla^ru\|_{L^2}^2\nonumber\\
&\leq C\delta (\|\nabla^ru\|_{L^2}^2+\|u\|_{H^{s-1}}^2)+C\delta\|\nabla^r \sigma\|_{L^2}^2).
\end{align}
\end{proof}
Now, we will bring forward the dissipation estimate for $\sigma$.
\begin{lem}\label{5.3}For $1\leq r\leq s$,
\begin{equation}\label{hh}
\frac{d}{dt}\int \nabla^{r-1}u\nabla^r\sigma\mathrm{d}x+\frac{\bar{\kappa}}{4}\|\nabla^r \sigma\|_{L^2}^2\leq C\|u\|_{H^{s}}^2+C\delta(\|\nabla^r \sigma\|_{L^2}^2+\|\nabla \sigma\|_{H^{s-1}}^2).
\end{equation}
\end{lem}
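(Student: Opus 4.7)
\textbf{Proof proposal for Lemma \ref{5.3}.} The idea is the classical cross-term (or ``interactive energy'') trick used to extract dissipation for the density-like variable $\sigma$, which carries no damping of its own. The plan is to apply $\nabla^{r-1}$ to the momentum equation \eqref{eqn2a} and $\nabla^r$ to the density equation \eqref{eqn1a}, test the first identity against $\nabla^r\sigma$ and the second against $\nabla^{r-1}u$, and add them; the time derivative of the product then isolates the structural term $-\bar\kappa\|\nabla^r\sigma\|_{L^2}^2$, which will serve as the sought-after dissipation, with all other contributions to be bounded by $\|u\|_{H^s}^2$, or by $\delta$-small multiples of $\|\nabla^r\sigma\|_{L^2}^2$ and $\|\nabla\sigma\|_{H^{s-1}}^2$.

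First I compute
\[
\frac{d}{dt}\int \nabla^{r-1}u\cdot\nabla^r\sigma\,dx
=\int \partial_t\nabla^{r-1}u\cdot\nabla^r\sigma\,dx+\int \nabla^{r-1}u\cdot\partial_t\nabla^r\sigma\,dx.
\]
Substituting the linearized part $\partial_t u=-\bar\kappa\nabla\sigma-\tfrac{1}{\tau}u+\cdots$, the first integral produces the principal term $-\bar\kappa\|\nabla^r\sigma\|_{L^2}^2$ together with the damping cross-term $-\tfrac{1}{\tau}\int \nabla^{r-1}u\cdot\nabla^r\sigma\,dx$, which by Young's inequality is absorbed as $\eta\|\nabla^r\sigma\|_{L^2}^2+C_\eta\|u\|_{H^{s}}^2$. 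Using $\partial_t\sigma=-\bar\kappa\nabla\cdot u+\cdots$ in the second integral, integration by parts gives a term of order $\bar\kappa\|\nabla^r u\|_{L^2}^2\leq C\|u\|_{H^s}^2$, again harmless.

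The nonlinear quadratic terms $\nabla^{r-1}(u\cdot\nabla u)$, $\nabla^{r-1}(\tfrac{1}{\nu}\sigma\nabla\sigma)$, $\nabla^r(u\cdot\nabla\sigma)$, $\nabla^r(\tfrac{1}{\nu}\sigma\nabla\cdot u)$ are controlled exactly as in the proofs of Lemmas \ref{yl3.1} and \ref{5.2}, via Moser-type product estimates, the Sobolev embedding $s>\frac{N}{2}+1$, and the a priori smallness \eqref{xiao}, yielding contributions of the form $C\delta(\|\nabla^r\sigma\|_{L^2}^2+\|\nabla\sigma\|_{H^{s-1}}^2+\|u\|_{H^s}^2)$. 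The nonlocal alignment term, paired now against $\nabla^r\sigma$ rather than $\nabla^r u$, is handled by reusing the convolution bounds \eqref{4.141}--\eqref{4.161} already established: writing the contribution as a difference $(\tfrac{1}{\nu}\sigma+\bar\kappa)^\nu-\bar\kappa^\nu$ plus a pure $\bar\kappa^\nu$ piece, the first factor is of size $\delta$ by the mean value bound \eqref{mean}, while the $\bar\kappa^\nu$ piece, which would otherwise be dangerous, is bounded by $a\bar\kappa^\nu\|\mathbf\Gamma\|_{L^1}\|\nabla^{r-1}u\|_{L^2}\|\nabla^r\sigma\|_{L^2}$ and then split by Young into $\eta\|\nabla^r\sigma\|_{L^2}^2+C_\eta\|u\|_{H^s}^2$.

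The main obstacle will be keeping track of the constants so that, after choosing $\eta$ small enough in the various Young inequalities, the coefficient in front of the good term $-\bar\kappa\|\nabla^r\sigma\|_{L^2}^2$ still leaves at least $\bar\kappa/4$; this is purely bookkeeping but requires that every estimate involving $\|\nabla^r\sigma\|_{L^2}^2$ (arising from the damping cross-term and from the $\bar\kappa^\nu$ alignment piece) be split with sufficiently small constants. Once this is done, collecting all bounds gives exactly \eqref{hh}.
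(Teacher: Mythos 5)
Your proposal is correct and follows essentially the same route as the paper: differentiate $\eqref{eqn1a}$ by $\nabla^r$, differentiate $\eqref{eqn2a}$ by $\nabla^{r-1}$, pair against $\nabla^{r-1}u$ and $\nabla^r\sigma$ respectively, extract the good term $\bar\kappa\|\nabla^r\sigma\|_{L^2}^2$, absorb the damping cross-term and the $O(1)$ part of the nonlocal alignment contribution via Young's inequality (spending a total of $3\bar\kappa/4$ of the dissipation), and control the quadratic convection/pressure terms via integration by parts plus Moser-type commutator bounds and the smallness $\eqref{xiao}$. The only cosmetic difference is that the paper does not bother splitting $(\tfrac{1}{\nu}\sigma+\bar\kappa)^\nu$ into $\bigl[(\tfrac{1}{\nu}\sigma+\bar\kappa)^\nu-\bar\kappa^\nu\bigr]+\bar\kappa^\nu$ in this lemma (that split is only needed where the sharp coefficient $2a\bar\kappa^\nu\|\mathbf\Gamma\|_{L^1}$ matters, as in Lemmas \ref{5.1}--\ref{5.2}); here the whole convolution is simply bounded in $L^\infty$ by $\eqref{4.141}$ and absorbed, which amounts to the same thing.
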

\begin{proof} First, we can directly calculate to obtain
\begin{equation}\label{utnt}
\frac{d}{dt}\int \nabla^{r-1}u\nabla^r\sigma\mathrm{d}x=\int \nabla^{r-1}u\nabla^r\sigma_t\mathrm{d}x+\int \nabla^{r-1}u_t \nabla^r\sigma\mathrm{d}x.
\end{equation}

Next, we will estimate the right-hand two terms of the upper equation.
Let $1\leq r\leq s$, applying $\nabla^{r}$ to (\ref{eqn1a}), multiplying it by $\nabla^{r-1}u$ and integrating over $\mathbb{R}^N$ we obtain
\begin{align}\label{s-1quan}
\int \nabla^{r-1}u\nabla^r\sigma_t\mathrm{d}x&=-\bar{
\kappa}\int \nabla\cdot\nabla^ru\cdot\nabla^{r-1}u\mathrm{d}x\nonumber\\
&\quad-\int \nabla^{r}(u\cdot\nabla \sigma)\cdot\nabla^{r-1}u\mathrm{d}x-\frac{1}{\nu}\int \nabla^{r}(\sigma\nabla\cdot u)\cdot\nabla^{s-1}u\mathrm{d}x\\
&\leq C\|\nabla^r u\|_{L^2}^2+C\|\nabla^r u\|_{L^2}\|\nabla^{r-1}(u\cdot\nabla \sigma)\|_{L^2}+C\|\nabla^r u\|_{L^2}\|\nabla^{r-1}(\sigma\nabla\cdot u)\|_{L^2}.\nonumber
\end{align}
In order to get the estimate of (\ref{s-1quan}), we estimate $\|\nabla^{r-1}(u\cdot\nabla \sigma)\|_{L^2}$ and $\|\nabla^{r-1}(\sigma\cdot\nabla u)\|_{L^2}$. By Moser type inequality and Sobolev embedding theorem, we have
\begin{align}\label{ss}
\|\nabla^{r-1}(u\cdot\nabla \sigma)\|_{L^2}&\leq C \|u\|_{L^\infty}\|\nabla^r \sigma\|_{L^2}+C\|\nabla \sigma\|_{L^\infty}\|\nabla^{r-1}u\|_{L^2}\nonumber\\
&\leq C\|u\|_{H^{s-1}}\|\nabla^r \sigma\|_{L^2}+C\|\nabla \sigma\|_{H^{s-1}}\|\nabla^{r-1}u\|_{L^2}\nonumber\\
&\leq C\delta(\|\nabla^r \sigma\|_{L^2}+\|\nabla \sigma\|_{H^{s-1}}).
\end{align}
\begin{align}\label{sss}
\|\nabla^{r-1}(\sigma\cdot\nabla u)\|_{L^2}&\leq \|\sigma\|_{L^\infty}\|\nabla^r u\|_{L^2}+\|\nabla u\|_{L^\infty}\|\nabla^{r-1}\sigma\|_{L^2}\nonumber\\
&\leq C\|\sigma\|_{H^{s-1}}\|\nabla^r u\|_{L^2}+C\|\nabla u\|_{H^{s-1}}\|\nabla^{r-1}\sigma\|_{L^2}\nonumber\\
&\leq C\delta(\|\nabla^ru\|_{L^2}+\|\nabla u\|_{H^{s-1}})
\end{align}

Then, substituting (\ref{ss}) and (\ref{sss}) for (\ref{s-1quan}) and applying Young's inequality, we can deduce that
\begin{align}\label{nt}
\int \nabla^{r-1}u\nabla^r\sigma_t\mathrm{d}x\leq C(\|\nabla^r u\|_{L^2}^2+\|\nabla u\|_{H^{s-1}}^2) +C\delta(\|\nabla^r \sigma\|_{L^2}^2+\|\nabla \sigma\|_{H^{s-1}}^2).
\end{align}

Now, we estimate the second item on the right side of (\ref{utnt}).
Let $1\leq r\leq s$, applying $\nabla^{r-1}$ to (\ref{eqn2a}), multiplying it by $\nabla^r\sigma$ and integrating over $\mathbb{R}^N$ we obtain
\begin{align}
\int \nabla^{r-1}u_t\nabla^r\sigma\mathrm{d}x&+\bar{\kappa}\|\nabla^r \sigma\|_{L^2}^2\nonumber=-\frac{1}{\tau}\int \nabla^{r-1}u\nabla^r\sigma\mathrm{d}x\\
&-\int \nabla^{r-1}(u\cdot\nabla u)\nabla^r\sigma\mathrm{d}x-\frac{1}{\nu}\int \nabla^{r-1}(\sigma\nabla \sigma)\nabla^r\sigma\mathrm{d}x\nonumber\\
&-a\int \nabla^{r-1}
\Big(\int \mathbf \Gamma(x-y)(\frac{1}{\nu}\sigma(y)+\bar{\kappa})^\nu u(x)\mathrm{d}y\Big)\nabla^r\sigma(x)\mathrm{d}x\nonumber\\&+a\int \nabla^{r-1}
\Big(\int \mathbf \Gamma(x-y)(\frac{1}{\nu}\sigma(y)+\bar{\kappa})^\nu u(y)\mathrm{d}y\Big)\nabla^r\sigma(x)\mathrm{d}x
=\sum_{i=1}^{5}I_i.
\end{align}
 Using Young's and Holder's inequality, we have
\begin{equation}\label{a}
I_1=-\frac{1}{\tau}\int \nabla^{r-1}u\nabla^r\sigma\mathrm{d}x\leq C\|\nabla^{r-1}u\|_{L^2}^2+\frac{\bar{\kappa}}{4}\|\nabla^{r}\sigma\|_{L^2}^2.\\
\end{equation}
By a method similar to the estimate for $\|\nabla^{r-1}(u\cdot\nabla \sigma)\|_{L^2}$ and $\|\nabla^{r-1}(\sigma\cdot\nabla u)\|_{L^2}$, we have
\begin{align}
I_2&=-\int \nabla^{r-1}(u\cdot\nabla u)\nabla^r\sigma\mathrm{d}x\leq C(\|\nabla^r u\|_{L^2}^2+\|\nabla u\|_{H^{s-1}}^2) +C\delta\|\nabla^r \sigma\|_{L^2}^2,\\
I_3&=-\frac{1}{\nu}\int \nabla^{r-1}(\sigma\nabla \sigma)\nabla^r\sigma\mathrm{d}x\leq C\delta(\|\nabla^r \sigma\|_{L^2}^2+\|\nabla \sigma\|_{H^{s-1}}^2).
\end{align}

Next, we estimate $I_4$. Similar to (\ref{4.14}) and (\ref{4.16}), using Young's inequality and Moser type inequality, we have
\begin{align}
I_4&=-a\int \nabla^r\sigma(x)\nabla^{r-1}_x
\Big(u(x)\int \mathbf \Gamma(x-y)(\frac{1}{\nu}\sigma(y)+\bar{\kappa})^\nu \mathrm{d}y\Big)\mathrm{d}x\nonumber\\
&\leq\|\nabla^r\sigma(x)\|_{L^2}\|\nabla^{r-1}
\Big(u(x) \mathbf \Gamma\ast(\frac{1}{\nu}\sigma+\bar{\kappa})^\nu(x) \|_{L^2}\nonumber\\
&\leq \|\nabla^r\sigma|_{L^2}\|\nabla^{r-1}u\|_{L^2}\|\mathbf \Gamma\ast(\frac{1}{\nu}\sigma+\bar{\kappa})^\nu \|_{L^\infty}+\|\nabla^r\sigma|_{L^2}\|u\|_{L^\infty}\|\mathbf \Gamma\ast(\frac{1}{\nu}\sigma+\bar{\kappa})^\nu \|_{L^2}\nonumber\\
&\leq C \|\mathbf \Gamma\|_{L^1}\|\nabla^r\sigma|_{L^2}\Big(\|\nabla^{r-1}u\|_{L^2}+\|u\|_{H^{s-1}}\|\sigma\|_{H^{r-1}}\Big)\nonumber\\
&\leq C \|u\|_{H^{s-1}}^2+\frac{\bar{\kappa}}{4}\|\nabla^r\sigma\|_{L^2}.
\end{align}
Applying the same method as $I_4$, we can get
\begin{align}
I_5&=a\int \nabla_x^{r-1}
\Big(\int \mathbf \Gamma(x-y)(\frac{1}{\nu}\sigma(y)+\bar{\kappa})^\nu u(y)\mathrm{d}y\Big)\nabla^r\sigma(x)\mathrm{d}x\nonumber\\
&=a\int
\int \mathbf \Gamma(x-y) \nabla_y^{r-1}\Big((\frac{1}{\nu}\sigma(y)+\bar{\kappa})^\nu u(y)\Big)\mathrm{d}y\nabla^r\sigma(x)\mathrm{d}x\nonumber\\
&\leq C \|u\|_{H^{s-1}}^2+\frac{\bar{\kappa}}{4}\|\nabla^r\sigma|_{L^2}.\label{i4}
\end{align}

Collecting all estimates of $I_i$, we have
\begin{align}\label{ut}
\int \nabla^{r-1}u_t\nabla^r\sigma\mathrm{d}x+\frac{\bar{\kappa}}{4}\|\nabla^r \sigma\|_{L^2}^2\leq C\|u\|_{H^{s-1}}^2+C\delta(\|\nabla^r \sigma\|_{L^2}^2+\|\nabla \sigma\|_{H^{s-1}}^2).
\end{align}
Combining (\ref{nt}) and ({\ref{ut}}) to obtain
\begin{equation}
\frac{d}{dt}\int \nabla^{r-1}u\nabla^r\sigma\mathrm{d}x+\frac{\bar{\kappa}}{4}\|\nabla^r \sigma\|_{L^2}^2\leq C\|u\|_{H^{s}}^2+C\delta(\|\nabla^r \sigma\|_{L^2}^2+\|\nabla \sigma\|_{H^{s-1}}^2).
\end{equation}
\end{proof}
\subsection{The proof of global existence}

In this subsection, we construct the global-in-time solution by combining the local existence theory.

We sum up the estimate (\ref{guuh}) in Lemma \ref{5.2} form $r=1$ to $s$, and then add the estimate (\ref{gul2}) in Lemma \ref{5.1}, since $\delta$ is small and conditions (\ref{mu}), we can deduce that there exists $\varepsilon_1>0, C_1>0$ such that
\begin{equation}\label{guh3}
\frac{d}{dt}\Big(\|\sigma\|_{H^s}^2+\|u\|_{H^s}^2\Big)+\varepsilon_1\|u\|_{H^s}^2\leq C_1\delta \sum_{r=1}^s\|\nabla^{r} \sigma\|_{L^2}^2.
\end{equation}

Summing up the estimates (\ref{hh}) in Lemma \ref{5.3} from $r=1$ to $s$, for sufficient small $\delta$, there exist $\varepsilon_2>0, C_2>0$, such that
\begin{equation}\label{hhs}
\frac{d}{dt}\sum_{r=1}^s\int \nabla^{r-1}u\nabla^r\sigma\mathrm{d}x+\varepsilon_2\|\nabla \sigma\|_{H^{s-1}}^2\leq C_2\|u\|_{H^s}^2.
\end{equation}
Multiplying (\ref{hhs}) by $\dfrac{2C_1\delta}{\varepsilon_2}$, adding it to (\ref{guh3}), since $\delta$ is small, then there exits a constant $\varepsilon_3>0$ such that

\begin{equation}\label{guh31}
\frac{d}{dt}\Big(\|\sigma\|_{H^s}^2+\|u\|_{H^s}^2+\dfrac{2C_1\delta}{\varepsilon_2}\sum_{r=1}^s\int \nabla^{r-1}u\nabla^r\sigma\mathrm{d}x\Big)
+\varepsilon_3\Big(\|u\|_{H^s}^2+\|\nabla \sigma\|_{H^{s-1}}^2\Big)\leq 0.
\end{equation}

Note that there exist constant $C_3>0$ such that

\begin{equation}\label{unh3}
C_3^{-1}\Big(\|\sigma\|_{H^s}^2+\|u\|_{H^s}^2\Big)\leq\Big(\|\sigma\|_{H^s}^2+\|u\|_{H^s}^2+\dfrac{2C_1\delta}{\varepsilon_2}\sum_{r=1}^s
\int \nabla^{r-1}u\nabla^r\sigma\mathrm{d}x\Big)\leq C_3\Big(\|\sigma\|_{H^s}^2+\|u\|_{H^s}^2\Big).
\end{equation}

Integrating directly in time, with help  of (\ref{unh3}), we obtain

\begin{equation}\label{guh31}
\underset{0\leq \tilde{t} \leq t}{sup}\Big(\|\sigma\|_{H^s}^2(\tilde{t})+\|u\|_{H^s}^2(\tilde{t})\Big)
+\int_{0}^{t}\Big(\|u\|_{H^s}^2(\tilde{t})+\|\nabla \sigma\|_{H^{s-1}}^2(\tilde{t})d\tilde{t}\Big)\leq C_3^2\Big(\|\sigma_0\|_{H^s}^2+\|u_0\|_{H^s}^2\Big).
\end{equation}

Finally, we can  use Theorem \ref{local} and (\ref{guh31}) to prove the global existence of classical solutions for (\ref{eqn1a})-(\ref{eqn2a}). Applying the local existence of the classical solution in Theorem \ref{local}, we deduce that for any positive constant $\delta_0<\delta$, there is a positive constant $T_0$ depending only on $\delta_0,\ \delta$ such that if $\|\sigma_0\|_{H^s}+\|u_0\|_{H^s}<\delta_0$, then the solution of the Cauchy problem
(\ref{eqn1a})-(\ref{eqn3a}) satisfies
\begin{equation}
\underset{0\leq t\leq T_0}{\sup}\|\sigma\|_{H^s}+\|u\|_{H^s}<\delta.
\end{equation}
Then,  we choose that
\begin{equation}\label{oo}
\delta_0=\frac{\delta}{\sqrt{2(1+C_4^2)}},
\end{equation}
where $\delta$ and $C_4^2$ are given in (\ref{xiao}) and (\ref{guh31}), respectively. Let us define the maximal existence time $T_{max}>0$ of the system (\ref{eqn1a})-(\ref{eqn2a}) by
\begin{equation}
T_{max}:=\sup\{t\geq 0: \underset{0\leq t\leq T_0}{\sup}\|\sigma\|_{H^s}+\|u\|_{H^s}<\delta\}.
\end{equation}
Suppose $T_{max}<\infty$, then we can use the continuation argument and (\ref{guh31}) to get
\begin{equation}
\delta^2=\underset{0\leq t\leq T_{max}}{\sup}\|\sigma\|^2_{H^s}+\|u\|^2_{H^s}\leq C_4^2\Big(\|\sigma_0\|_{H^s}^2+\|u_0\|_{H^s}^2\Big)<C_4^2\frac{\delta^2}{2(1+C_4^2)}<\frac{\delta^2}{2}.
\end{equation}
This is a contradiction, hence, we can conclude that $T_{max}=\infty$.



In summary, we have completed the proof of Theorem 2.2.

\begin{appendix}
\section{inequality}

In the appendix, we present several lemmas used in the existence proof in Sections 3 and 4.
\begin{lem}\label{moser}{\rm(Moser type inequality)}For any pair of functions $f,\ g\in(H^k)\cap L^\infty(\mathbb{R}^N)$, we have
\begin{eqnarray}
\|\nabla^{s}(fg)\|_{L^2}\leq C(\|\nabla^r f\|_{L^{2}}\|g\|_{L^\infty}+\|f\|_{L^{\infty}}\|\nabla^k g\|_{L^2}).
\end{eqnarray}
Furthermore if $\nabla f\in L^{\infty}(\mathbb{R}^N)$ we have
\begin{eqnarray}
\|\nabla^{s}(fg)-f\nabla^{s}g\|_{L^2}\leq C(\|\nabla f\|_{L^{\infty}}\|\nabla^{s-1}g\|_{L^2}+\|g\|_{L^{\infty}}\|\nabla^r f\|_{L^2}).
\end{eqnarray}
\end{lem}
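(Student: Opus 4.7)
I would prove both bounds by the classical Leibniz-rule argument combined with H\"older's inequality and the Gagliardo--Nirenberg interpolation inequality on $\mathbb{R}^N$ (the Kato--Ponce/Moser scheme). For any multi-index $\alpha$ with $|\alpha|=s$ one has
\begin{equation*}
\partial^\alpha(fg)=\sum_{\beta\le\alpha}\binom{\alpha}{\beta}\partial^\beta f\,\partial^{\alpha-\beta}g,
\end{equation*}
so it suffices to bound $\|\partial^\beta f\,\partial^{\alpha-\beta}g\|_{L^2}$ for every splitting with $|\beta|+|\alpha-\beta|=s$, and then sum over $|\alpha|=s$.

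For the first inequality, the endpoint contributions $\beta=0$ and $\beta=\alpha$ are already of the desired form, giving $\|f\|_{L^\infty}\|\nabla^s g\|_{L^2}$ and $\|\nabla^s f\|_{L^2}\|g\|_{L^\infty}$ respectively. For an intermediate term with $1\le|\beta|\le s-1$, I apply H\"older with exponents $p=2s/|\beta|$ and $q=2s/|\alpha-\beta|$ (whose reciprocals sum to $1/2$), followed by the Gagliardo--Nirenberg estimates
\begin{equation*}
\|\partial^\beta f\|_{L^p}\le C\|\nabla^s f\|_{L^2}^{|\beta|/s}\|f\|_{L^\infty}^{1-|\beta|/s},\qquad \|\partial^{\alpha-\beta}g\|_{L^q}\le C\|\nabla^s g\|_{L^2}^{|\alpha-\beta|/s}\|g\|_{L^\infty}^{1-|\alpha-\beta|/s}.
\end{equation*}
Setting $\theta=|\beta|/s$, the product is $(\|\nabla^s f\|_{L^2}\|g\|_{L^\infty})^{\theta}(\|f\|_{L^\infty}\|\nabla^s g\|_{L^2})^{1-\theta}$, and Young's inequality converts this into $\theta\|\nabla^s f\|_{L^2}\|g\|_{L^\infty}+(1-\theta)\|f\|_{L^\infty}\|\nabla^s g\|_{L^2}$, which is the claimed bound (A.1).

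For the commutator estimate, the $\beta=0$ term cancels against $f\nabla^s g$, so only the terms with $1\le|\beta|\le s$ survive. For such $\beta$ I factor out one derivative by writing $\partial^\beta f=\partial^{\beta'}(\partial_j f)$ with $|\beta'|=|\beta|-1$, and then apply H\"older with exponents $2(s-1)/(|\beta|-1)$ and $2(s-1)/|\alpha-\beta|$ (well defined for $s\ge 2$; their reciprocals sum to $1/2$ since $|\beta'|+|\alpha-\beta|=s-1$). Applying Gagliardo--Nirenberg to $\nabla f$ with endpoint $L^\infty$ and to $g$ with endpoint $L^\infty$, one obtains with $\theta=(|\beta|-1)/(s-1)$
\begin{equation*}
\bigl\|\partial^\beta f\,\partial^{\alpha-\beta}g\bigr\|_{L^2}\le C\bigl(\|\nabla^s f\|_{L^2}\|g\|_{L^\infty}\bigr)^{\theta}\bigl(\|\nabla f\|_{L^\infty}\|\nabla^{s-1}g\|_{L^2}\bigr)^{1-\theta},
\end{equation*}
so Young's inequality yields (A.2). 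The extreme case $|\beta|=s$ contributes directly $\|\nabla^s f\|_{L^2}\|g\|_{L^\infty}$, and the degenerate case $s=1$ is immediate from $\|\nabla f\cdot g\|_{L^2}\le\|\nabla f\|_{L^2}\|g\|_{L^\infty}$.

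The main technical obstacle is the bookkeeping of interpolation exponents: one must verify for every splitting $\beta+(\alpha-\beta)=\alpha$ that the two H\"older exponents are admissible ($\ge 2$) and that the corresponding Gagliardo--Nirenberg scalings land exactly at the endpoints $L^\infty$ and $L^2$ on both sides. Once this pattern is established, the only remaining work is a sum over the finitely many multi-indices with $|\alpha|=s$, absorbing all combinatorial factors into the constant $C=C(s,N)$.
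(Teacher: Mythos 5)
Your argument is correct and is essentially the standard Leibniz--H\"older--Gagliardo--Nirenberg proof found in Majda--Bertozzi (Lemma~3.4), which is exactly where the paper's ``proof'' points: the paper offers no proof of its own, only the citation. Your exponent bookkeeping checks out in all the cases (endpoints, the commutator cancellation at $\beta=0$, the degenerate case $s=1$), and you have tacitly and sensibly normalized the lemma's inconsistent use of $s$, $r$, $k$ to a single order $s$.
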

\begin{proof}
See Lemma 3.4 in \cite{MAJ}.
\end{proof}

\begin{lem}\label{Young}{\rm(Young's inequality)} Let $p,q,r\geq1$ and $1/p+1/q+1/r=2$. Let $f\in L^{p},g\in L^{q}$ and $h\in L^{r}$. Then
\begin{align}
|\int f(x)(g\ast h)(x)dx|=|\int\int f(x)g(x-y)h(x)|\mathrm{d}x\mathrm{d}y\nonumber\\
\leq C_{p,q,r,N}\|f\|_{L^p}\|g\|_{L^q}\|h\|_{L^r}.
\end{align}
\end{lem}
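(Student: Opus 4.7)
The plan is to prove the inequality directly by a judicious three-factor splitting of the integrand followed by the generalized H\"older inequality and Fubini's theorem. The hypothesis $1/p+1/q+1/r=2$ is equivalent to $(1-1/p)+(1-1/q)+(1-1/r)=1$, which suggests introducing the conjugate exponents $p',q',r'$ and setting
\[
a=\tfrac{1}{r'}=1-\tfrac{1}{r},\qquad b=\tfrac{1}{p'}=1-\tfrac{1}{p},\qquad c=\tfrac{1}{q'}=1-\tfrac{1}{q}.
\]
Then $a+b+c=1$ and, crucially, $a+c=1/p$, $a+b=1/q$, $b+c=1/r$.

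First I would rewrite the left-hand side (by Fubini, once integrability is verified \emph{a posteriori}) as the positive double integral $\int\!\!\int |f(x)\,g(x-y)\,h(y)|\,dx\,dy$. Second, I would split the integrand as the pointwise identity
\[
|f(x)\,g(x-y)\,h(y)|
=\bigl[|f(x)|^{p}|g(x-y)|^{q}\bigr]^{a}\,
\bigl[|g(x-y)|^{q}|h(y)|^{r}\bigr]^{b}\,
\bigl[|f(x)|^{p}|h(y)|^{r}\bigr]^{c},
\]
which holds because the exponents on $|f(x)|$, $|g(x-y)|$ and $|h(y)|$ are $p(a+c)=1$, $q(a+b)=1$ and $r(b+c)=1$, respectively.

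Third, since $a+b+c=1$, the generalized H\"older inequality with three exponents $1/a,1/b,1/c$ applied to the product above yields
\[
\int\!\!\int|fgh|\,dx\,dy
\leq \Bigl(\int\!\!\int|f|^{p}|g(x-y)|^{q}\,dx\,dy\Bigr)^{a}
\Bigl(\int\!\!\int|g(x-y)|^{q}|h|^{r}\,dx\,dy\Bigr)^{b}
\Bigl(\int\!\!\int|f|^{p}|h|^{r}\,dx\,dy\Bigr)^{c}.
\]
Fourth, each of these three double integrals factorizes by Fubini and translation invariance of Lebesgue measure: $\int\!\!\int|f(x)|^{p}|g(x-y)|^{q}dx\,dy=\|f\|_{L^{p}}^{p}\|g\|_{L^{q}}^{q}$, and similarly for the other two. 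Inserting these factorizations and collecting exponents via $p(a+c)=q(a+b)=r(b+c)=1$, the right-hand side collapses exactly to $\|f\|_{L^{p}}\|g\|_{L^{q}}\|h\|_{L^{r}}$, giving the claimed bound with $C_{p,q,r,N}=1$.

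The only real obstacle is identifying the correct triple $(a,b,c)$; this is precisely where the constraint $1/p+1/q+1/r=2$ enters, and no other choice makes all three exponent identities compatible. Once the splitting is in hand, the argument is mechanical. An alternative route would be to first deduce the classical two-function Young convolution inequality $\|g*h\|_{L^{p'}}\leq\|g\|_{L^{q}}\|h\|_{L^{r}}$ (whose hypothesis $1/q+1/r=1+1/p'$ is exactly the given condition) and then pair it with H\"older's inequality between $f$ and $g*h$; but the direct three-factor approach above handles everything in one step and avoids invoking interpolation.
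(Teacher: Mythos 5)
Your proof is correct. The paper itself gives no argument for this lemma --- it only cites Theorem 4.2 of Lieb--Loss --- and your three-factor splitting with the generalized H\"older inequality is precisely the standard proof of that cited theorem, so you have in effect supplied the proof the paper outsources. The exponent bookkeeping all checks out: $a+b+c=1$, the identities $p(a+c)=q(a+b)=r(b+c)=1$ make the pointwise factorization exact, and Fubini plus translation invariance collapses each double integral to a product of norms, yielding the sharp-in-form constant $C=1$. Two minor remarks: (i) when one of $p,q,r$ equals $1$ the corresponding exponent among $a,b,c$ vanishes and you should fall back to two-factor H\"older (the argument goes through unchanged); (ii) you have tacitly corrected the typo in the statement, where $h(x)$ inside the double integral should read $h(y)$ --- your rewriting with $h(y)$ is the intended claim.
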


\begin{proof}
See Theorem 4.2 in \cite{Elliott}.
\end{proof}

\begin{lem}\label{GN}{\rm(Gagliardo-Nirenberg inequality)}
Let $f\in W_{0}^{1,q}\cap L^r$ for some $r\leq1$. There exists a constant $C$ depending upon $N, P, r$ such that
\begin{equation}
\|u\|_{L^p}\leq C\|\nabla u\|_{L^q}^\theta\|u\|_{L^r}^{1-\theta},
\end{equation}
where $\theta\in[0,1]$ and $p,q\leq1$ are linked by
\begin{equation}
\theta=\Big(\frac{1}{r}-\frac{1}{p}\Big)\Big(\frac{1}{N}-\frac{1}{q}+\frac{1}{r}\Big)^{-1}.
\end{equation}
\begin{proof}
See Theorem 1.1 of the Chapter 10 in \cite{ED}.
\end{proof}
\end{lem}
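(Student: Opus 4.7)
The plan is to prove the Gagliardo--Nirenberg inequality by combining the Sobolev embedding theorem (which handles the derivative endpoint $\theta=1$) with Hölder interpolation on the $L^p$ scale. The exponent relation for $\theta$ is forced by scale invariance, and once the two endpoints are available the intermediate inequality is a one-line consequence of Hölder.

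\textbf{Step 1: Scaling fixes $\theta$.} For $\lambda>0$ set $u_\lambda(x):=u(\lambda x)$. Then $\|u_\lambda\|_{L^p}=\lambda^{-N/p}\|u\|_{L^p}$, $\|\nabla u_\lambda\|_{L^q}=\lambda^{1-N/q}\|\nabla u\|_{L^q}$, and $\|u_\lambda\|_{L^r}=\lambda^{-N/r}\|u\|_{L^r}$. Applying the inequality to $u_\lambda$ and demanding that the resulting power of $\lambda$ vanish gives
\begin{equation*}
-\frac{N}{p}=\theta\Bigl(1-\frac{N}{q}\Bigr)+(1-\theta)\Bigl(-\frac{N}{r}\Bigr),
\end{equation*}
which rearranges exactly to the formula for $\theta$ stated in the lemma. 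If instead one proved the inequality with a different $\theta$, sending $\lambda\to 0$ or $\lambda\to\infty$ would force the constant to be infinite, so no other $\theta$ can work.

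\textbf{Step 2: The endpoint $\theta=1$.} In the subcritical range $q<N$ the endpoint case $\theta=1$ reads $\|u\|_{L^{q^*}}\le C\|\nabla u\|_{L^q}$ with $q^*=Nq/(N-q)$. This is the classical Sobolev embedding; I would prove it in the standard Nirenberg way by writing, for $u\in C_c^\infty$,
\begin{equation*}
|u(x)|\le \int_{-\infty}^{x_i}|\partial_i u(x_1,\dots,t,\dots,x_N)|\,dt\qquad(i=1,\dots,N),
\end{equation*}
taking the geometric mean over $i$, and applying the Loomis--Whitney inequality to obtain the case $q=1$ (i.e.\ $L^{N/(N-1)}$); the general $1\le q<N$ case then follows by applying the $q=1$ bound to $|u|^\gamma$ for a suitable $\gamma>1$ and using Hölder.

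\textbf{Step 3: Hölder interpolation for the intermediate $\theta$.} A direct computation (using the formula from Step 1) shows that $p$ lies between $r$ and $q^*$ with the specific interpolation weight
\begin{equation*}
\frac{1}{p}=\frac{\theta}{q^*}+\frac{1-\theta}{r}.
\end{equation*}
Hölder's inequality then yields $\|u\|_{L^p}\le \|u\|_{L^{q^*}}^{\theta}\|u\|_{L^r}^{1-\theta}$, and combining with Step~2 gives the claim on $C_c^\infty$; a density argument in $W^{1,q}_0\cap L^r$ finishes the proof.

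\textbf{Main obstacle.} The one place that requires care is the supercritical regime $q\ge N$, where $q^*$ is not defined and Step~2 must be replaced by $W^{1,q}\hookrightarrow L^{\tilde p}$ for any $\tilde p\in[q,\infty)$ (with the Morrey/BMO embedding at $q=N$). The interpolation in Step~3 then uses $\tilde p$ in place of $q^*$, and one recovers the same range of admissible $\theta$. Modulo this bookkeeping for the endpoint/borderline exponents, the proof is essentially "scaling $+$ Sobolev $+$ Hölder."
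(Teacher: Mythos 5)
The paper does not actually prove this lemma; it simply cites Theorem 1.1 of Chapter 10 in DiBenedetto's \emph{Real Analysis}, so there is no in-paper argument to compare against. Your proposal supplies the standard proof that such references give, and it is correct in outline: the dilation computation in Step 1 correctly forces the stated relation for $\theta$ (note $\frac{1}{q^*}=\frac{1}{q}-\frac{1}{N}$, so your scaling identity is exactly $\frac1p=\frac{\theta}{q^*}+\frac{1-\theta}{r}$); Step 2 is the Gagliardo--Nirenberg--Sobolev endpoint via the Loomis--Whitney slicing argument; and since $\frac1p$ is then a convex combination of $\frac{1}{q^*}$ and $\frac1r$ with weight exactly $\theta$, the Lyapunov/H\"older interpolation in Step 3 closes the argument on $C_c^\infty$, with density handling $W^{1,q}_0\cap L^r$. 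Your caveat about the regime $q\ge N$ is the right one to flag (at $q=N$ the case $\theta=1$ genuinely fails, so the admissible range of $\theta$ must be restricted there), but for the way the lemma is used in this paper ($L^2$-based spaces, subcritical interpolation) only the case $q=r=2<N^*$ matters and your Steps 1--3 cover it. Incidentally, the lemma as printed contains typographical slips ($r\le1$ and $p,q\le1$ should read $\ge1$, and $f$ versus $u$); your proof implicitly corrects these, which is the right reading.
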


\end{appendix}


\end{document}